\def\mathscr{\mathcal}
\theoremstyle{plain}
\newtheorem{theorem}[equation]{Theorem}
\newtheorem{proposition}[equation]{Proposition}
\newtheorem{lemma}[equation]{Lemma}
\theoremstyle{definition}
\newtheorem{definition}[equation]{Definition}
\newtheorem{remark}[equation]{Remark}
\newtheorem{example}[equation]{Example}
\numberwithin{equation}{section}
\newcommand{\Z}{{\mathbb Z}}
\newcommand{\N}{{\mathbb N}}
\newcommand{\R}{{\mathbb R}}
\renewcommand{\leq}{\leqslant}
\renewcommand{\geq}{\geqslant}
\renewcommand{\preceq}{\preccurlyeq}
\def\mathscr{\mathcal}
\numberwithin{equation}{section}
\numberwithin{figure}{section}
\numberwithin{equation}{subsection}
\numberwithin{figure}{section}
\newtheorem{notation}[equation]{Notation}
\theoremstyle{plain}
\begin{document}

\title{Isoperimetric Inequalities using Varopoulos Transport}
\author{Antara Mukherjee}
\address{Dept.\ of Mathematics and Computer Science\\
        The Citadel,\\
        Charleston, SC 29409\\
        USA}
\email{antara.mukherjee@citadel.edu}
\date{\today}

\begin{abstract}
The main results in this paper provide upper bounds of the second order Dehn functions of three-dimensional groups Nil and Sol. These upper bounds are obtained by using the Varopoulos transport argument on dual graphs. The first step is to start with reduced handlebody diagrams of the three-dimensional balls either immersed or embedded in the universal covers of each group and then define dual graphs using the 0-handles as vertices, 1-handles as edges. The idea is to reduce the original isoperimetric problem involving volume of three-dimensional balls and areas of their boundary spheres to a problem involving Varopoulos' notion of volume and boundary of finite domains in dual graphs.
\end{abstract}

\maketitle

\section{Introduction}
\subsection{History of Filling Functions}

\paragraph{}

 The origin of the quest to find a link between topology and combinatorial group theory can be traced back to Belgian physicist Plateau's (1873, \cite{Pla}) classical question whether every rectifiable Jordan loop in every 3-dimensional Euclidean space bounds a disc of minimal area. Since then geometers and topologists have been investigating various ways to obtain efficient fillings of spheres by minimal volume balls. Thanks to the efforts of Dehn \cite{Dehn} and Gromov \cite{Gr3} we now know that there is an intimate connection between this classical geometric problem and group theory.
Various other results on Dehn functions can be found in papers by McCammond \cite{Cammond}, Ol'shanski{\u\i} \cite{Ol} and Rips \cite{Rips1}. The most significant development in this area has been Gromov's introduction of word hyperbolic groups.\\
  Important results in the area of Dehn functions using different techniques also appear in the pair of following papers, the first in 1997 (published in 2002) by Sapir, Birget and Rips (\cite{sapir}) and the second in 2002 by Birget, Ol'shanski{\u\i}, Yu, Rips and Sapir, (\cite{Ol1}). They showed that there exists a close connection between Dehn functions and complexity functions of Turing machines. One of their main results said that the Dehn function of a finitely presented group is equivalent to the time function of a two-tape Turing machine.
  More of this history and background on isoperimetric inequalities can be found in the paper by Bridson in \cite{Bri}.

Since the 1990's topologists have been interested in Dehn functions in higher dimensions. Gromov \cite{Gr2}, Epstein et al.\cite{Ep}, first introduced the higher order Dehn functions and Alonso et al. \cite{Al} and Bridson \cite{Bri1} produced the first few results in the context of these functions.

This paper would not have been possible without the support and guidance of my
advisor, Dr. Noel Brady of the University of Oklahoma.

\subsection{Goal of this research}
\paragraph{}
The main theorems of this paper provide upper bounds of the second order Dehn functions for 3-dimensional groups Nil and Sol.

\begin{theorem} [A. Mukherjee] \label{th1}
 The upper bound of the second order Dehn function (denoted by $\delta^{(2)}$) of the lattices in the Nil geometry
is given by $\delta^{(2)}(n) \preceq n^{\frac{4}{3}} $.\\
In other words, the upper bound of the second order Dehn function of the groups  $\Z^2 \rtimes_\phi \Z$, where $\phi$ has eigenvalues $\pm 1$ and has infinite order, is given by, $\delta^{(2)}(n) \preceq n^{\frac{4}{3}} $.

\end{theorem}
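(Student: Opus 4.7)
The plan is to convert the topological filling problem for 2-spheres in the universal cover $\widetilde{K}$ of a 3-dimensional presentation complex of Nil into a combinatorial isoperimetric problem on a dual graph, which can then be attacked with Varopoulos' transport technique. I would begin with a presentation 2-complex $K_2$ for the lattice $G = \Z^2 \rtimes_\phi \Z$, and pass to a 3-complex $K$ by attaching finitely many 3-cells along generators of $\pi_2(K_2)$; this ensures $\widetilde{K}$ is 2-connected so that second order fillings make sense. Given an admissible map $f\co S^2 \to \widetilde{K}$ of area at most $n$, I would extend it to a map of a 3-ball $B^3$ and equip $B^3$ with a reduced handlebody decomposition in which 0-handles and 1-handles form the 1-skeleton of a dual graph $\Gamma$, while 2-handles correspond to its 2-cells.

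The second step is to identify the combinatorial quantities that control the isoperimetric problem. The number of 0-handles is, up to a multiplicative constant, the three-dimensional volume of $B^3$ that I want to bound, while the number of 0-handles whose closure meets $\partial B^3 = S^2$ is comparable to the area $n$ of the boundary sphere. The map $f$, together with the chosen handlebody structure, pushes $\Gamma$ forward to a subgraph (with multiplicity) of the Cayley graph of $G$, sending each 0-handle to a group element and each 1-handle to a generator edge. For the transport argument to apply, I must verify that this pushforward has \emph{bounded multiplicity}: each ball of bounded radius in $\mathrm{Cay}(G)$ receives at most a uniformly bounded number of 0-handles of $\Gamma$. This is exactly where the reducedness assumption on the handlebody diagram does its geometric work, preventing the immersed ball from folding wastefully on itself.

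With bounded multiplicity in hand, I would invoke the Varopoulos isoperimetric inequality. A lattice in Nil has polynomial volume growth of degree $4$, so for every finite $A \subset \mathrm{Cay}(G)$ one has $\abs{A} \lesssim \abs{\partial A}^{4/3}$. Applied to the image of $V(\Gamma)$, and combined with the bounded-multiplicity estimate, this yields $\abs{V(\Gamma)} \lesssim n^{4/3}$, which translates back to a bound on the number of 0-handles of $B^3$, and hence to $\mathrm{Vol}(B^3) \preceq n^{4/3}$ as required.

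The hard part is establishing bounded multiplicity for handlebody diagrams that are only immersed, not embedded, in $\widetilde{K}$. This step replaces the usual embedded-domain hypothesis of classical Varopoulos-style arguments, and requires a careful combinatorial analysis of how the 0- and 1-handles interact with the Nil relators along cancellation moves in the reduction procedure. Once this geometric-to-combinatorial passage is secured, the polynomial-growth Varopoulos inequality delivers the exponent $4/3$ essentially automatically, completing the upper bound on $\delta^{(2)}(n)$.
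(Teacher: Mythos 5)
Your overall architecture --- dual graph on the ambient 3-complex, a Varopoulos isoperimetric inequality, polynomial growth of degree $4$ yielding the exponent $4/3$ --- coincides with the paper's. However, the step you flag as ``the hard part,'' deducing \emph{bounded multiplicity} of the pushforward from reducedness, is a genuine gap, and it is not something reducedness provides. Reducedness (Definition \ref{reduced}) only rules out \emph{adjacent} cancelling pairs of $0$-handles joined by a $1$-handle; it does not prevent arbitrarily many non-adjacent $0$-handles from mapping onto the same $3$-cell of $\widetilde{X}$ (for instance, all with the same orientation), so the number of preimages of a given cell can be unbounded even in a reduced diagram. Without bounded multiplicity you cannot replace the immersed image by a finite \emph{set} $A$ in the Cayley graph and invoke the unweighted inequality $|A|\lesssim|\partial A|^{4/3}$.

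The paper sidesteps this entirely, and this is the essential idea missing from your proposal: rather than trying to push the diagram toward an embedding, it works directly with the preimage-count function $\phi_D\colon V_\Gamma\to\N\cup\{0\}$, observes $\aabs{\phi_D}=Vol^3(D^3)$, defines a weighted gradient $\aabs{\nabla\phi_D}$ over the Varopoulos boundary, and proves in Lemma \ref{bound} that $\aabs{\nabla\phi_D}\leq|\partial D^3|$ for a reduced diagram. It then redoes the transport computation for such weighted functions (Propositions \ref{prop1} and \ref{prop2}, following Coulhon and Saloff-Coste rather than the set-theoretic form of Varopoulos' inequality), obtaining $\widehat{T_D^\gamma}\geq\tfrac12\aabs{\phi_D}$ and $T_D^\gamma\leq l_\gamma\aabs{\nabla\phi_D}$, so that together with $|B_1(r)|\sim r^4$ one gets $\aabs{\phi_D}\preceq\aabs{\nabla\phi_D}^{4/3}$ with no multiplicity bound ever required. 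You should replace your bounded-multiplicity step by this weighted transport argument; once that is done the rest of your outline goes through as you describe.
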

\begin{theorem} [A. Mukherjee]\label{th2}
 The upper bound of the second order Dehn function (denoted by $\delta^{(2)}$) of the lattices of the 3-dimensional geometry Sol is given by $\delta^{(2)}(n) \preceq n\ln (n) $.\\
 In other words, that the second order Dehn functions for the groups  $\Z^2 \rtimes_\phi \Z$, where the eigenvalues of $\phi$ are not $\pm 1$,
$\delta^{(2)}(n) \preceq n\ln (n) $.
\end{theorem}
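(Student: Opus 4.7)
The plan is to follow the same template laid out for Theorem \ref{th1}. Starting from a 2-sphere $S$ of area at most $n$ in the universal cover of a compact $K(\pi,1)$ for $\Z^2 \rtimes_\phi \Z$, I would first exhibit a singular 3-ball $B$ with boundary $S$ and pull back a Morse-type handle decomposition so that 0-handles are 3-cells and 1-handles are thickened edges. After reducing by cancelling redundant 0/1 handle pairs, I form the dual graph $\G$: vertices are 0-handles, edges are 1-handles. The valence of $\G$ is bounded by a constant depending only on the compact quotient. Those vertices whose 0-handle meets $S$ are controlled in number by $\mathrm{Area}(S)$, while the total number of vertices controls $\mathrm{Vol}(B)$ up to multiplicative constants. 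The isoperimetric problem for $S$ bounding $B$ is thereby reduced to an isoperimetric problem for a finite domain $D \subset \G$.

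The crux is to derive the correct Varopoulos-type inequality for such domains. Unlike the Nil case, Sol is exponentially distorted: the normal $\Z^2$ sits inside horospherical planes whose induced metric expands exponentially along one direction of the $\Z$ factor and contracts along the other. I would foliate $\G$ by horospherical slices indexed by the $\Z$ coordinate, and exploit the fact that any 2-sphere of area $n$ in the universal cover of Sol is confined, after reduction, to a slab of thickness $O(\log n)$ in the $\Z$ direction, since a region of area $n$ can only span logarithmically many horospherical layers before the exponential stretching would force the area to exceed $n$. Within this slab the dual graph inherits a layered product structure, with each layer looking (coarsely) like a planar rescaled $\Z^2$.

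Next I would design a Varopoulos transport on $\G$ adapted to the layering: a family of flows routing mass from the interior of $D$ outward, first distributing along each horospherical layer using the local quasi-Euclidean planar isoperimetric estimate, and then moving across the $O(\log n)$ layers in the $\Z$ direction toward $\partial D$. Summing the transport contributions layer by layer should produce $|D| \preceq |\partial D| \cdot \log|\partial D|$, which, translated back via the dictionary between $\G$ and the handlebody decomposition, yields the claimed bound $\delta^{(2)}(n) \preceq n\ln(n)$.

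The main obstacle is the bookkeeping of the cross-layer transport: one must ensure that mass pushed across layers actually reaches $\partial D$ rather than accumulating at interior layers, and that the $\log n$ factor (coming from the number of horospherical slices the sphere can span) is extracted cleanly, not absorbed into a multiplicative constant that grows with $n$. This requires combining a hyperbolic-type isoperimetric estimate within each horosphere with a careful choice of transport paths across layers, and checking that the reduced handlebody diagram can always be arranged to respect the horospherical foliation up to a uniformly bounded error. Once this flow is constructed, the remaining estimates reduce to routine summation.
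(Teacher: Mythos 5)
Your proposal diverges substantially from the paper's argument, and the divergence introduces a gap. The paper never foliates by horospheres or performs any layer-by-layer routing. Its Varopoulos step is the generic one already set up in Section~7: choose $r$ so that $|B_1(r)| \geq 2\|\phi_D\| > |B_1(r-1)|$, invoke the averaging bound $\widehat{T_D^\gamma} \geq \tfrac{1}{2}\|\phi_D\|$ and the path-length bound $T_D^\gamma \leq l_\gamma \|\nabla\phi_D\|$, and get $\|\phi_D\| \leq 2 l_{\gamma_0}\|\nabla\phi_D\|$ with $l_{\gamma_0} \leq r$. The factor $\ln(\cdot)$ then falls out purely from the exponential growth type of $\mathcal{S}$: since $|B_1(r)| \sim Ce^{\ln(k)\, r}$, the normalization $|B_1(r-1)| < 2\|\phi_D\|$ forces $r \preceq \ln\|\phi_D\|$, and a short log-manipulation converts $\|\phi_D\| \preceq \ln(\|\phi_D\|)\,\|\nabla\phi_D\|$ into $\|\phi_D\| \preceq \ln(\|\nabla\phi_D\|)\,\|\nabla\phi_D\|$. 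No geometric confinement of the diagram plays any role; the same two transport propositions are reused verbatim from the Nil case, and only the ball-growth estimate changes.

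The specific step in your plan that fails is the claim that a reduced diagram of boundary area $n$ is confined to a slab of thickness $O(\log n)$ in the $\Z$-direction. In the Sol metric $e^{2z}dx^2 + e^{-2z}dy^2 + dz^2$, one direction of each horospherical $\Z^2$ contracts as the other expands, so a tube whose cross-section is a rectangle of Euclidean sides $e^{-z}\times e^{z}$ has constant perimeter for all $z$. A capped tube of this form has area $\sim T$ while spanning a $z$-slab of thickness $T$; reduction (which only cancels interior oppositely-oriented 0/1 handle pairs and does not touch the boundary $S^2$) cannot collapse it. So the slab estimate is false in general, and the layered-flow bookkeeping built on top of it does not go through. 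If you want a geometric (rather than growth-averaging) route, you would need to first construct a specific efficient filling respecting the horospherical structure and prove the slab bound for that filling, rather than asserting it for an arbitrary reduced diagram. The paper's approach sidesteps all of this, which is precisely why the same transport lemmas serve both the Nil ($n^{4/3}$, polynomial growth of degree $4$) and Sol ($n\ln n$, exponential growth) computations.
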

\section{Overview of proof of the main theorems}
\paragraph{}
The main goal of this research is to obtain upper bounds of second order Dehn function of the groups mentioned in the theorems above.


In order to obtain upper bounds, we start with a reduced, transverse diagram $f:(D^3,S^2)\rightarrow K$, where $D^3$ is a 3-ball, $S^2$ is its boundary sphere and $K$ is the 3-dimensional ambient space. We then define a dual Cayley graph $\Gamma$ in the ambient space $K$ where each vertex of $\Gamma$ is a 3-cell in $K$ and each edge is a 2-cell common to two adjacent 3-cells. Now, we consider a finite subset of vertices $D$ of $\Gamma$ corresponding to the 0-handles of the diagram mapped into $K$ and we define an integer-valued function $\phi_D:\Gamma^{(0)}\rightarrow \Z^+$ with finite support i.e, $\phi_D(\alpha)=\hspace{0.1cm}$number of pre-images of $\alpha$ in $(D^3,S^2)$ for all $\alpha\in D$, otherwise $\phi_D(\alpha)=0$. This leads us to the fact that the volume of the 3-ball $D^3$ and $||\phi_D||= \displaystyle\sum_{\sigma\in D}\phi_D(\sigma)$ are equal. The boundary of $D$ according to Varopoulos is $\partial_V D= \{\tau:  \tau$  is a face of two 3-cells, $ \sigma_i, \sigma_j ;\phi_D(\sigma_i)\neq \phi_D(\sigma_j)\}$, next we define $\parallel \nabla \phi_D \parallel=
\displaystyle\sum_{\tau \in\partial_V D }| \phi_D(t(\tau))-\phi_D(i(\tau))|$, where $i,t$ are functions which determine the initial and terminal vertices of an edge in $\Gamma$. This function gives the number of edges in the boundary $\partial_V D$. In fact, we can show that $\parallel \nabla \phi_D \parallel \leq Vol^2(S^2)$. Therefore the problem of upper bound reduces to an inequality involving $||\phi_D||$ and $\parallel \nabla \phi_D \parallel$ provided $Vol^3(D^3)=||\phi_D||$ and $Vol^2(S^2)\geq ||\nabla \phi_D||$.

Finally, we show that $||\phi_D||\leq ||\nabla \phi_D||^\frac{4}{3}$ for the lattices in the 3-dimensional geometry Nil and $||\phi_D||\leq ||\nabla \phi_D||\ln(||\nabla \phi_D||)$ for the lattices in the 3-dimensional geometry Sol using a variation the Varopoulos transport argument.

\subsection{Organization of the paper}
\paragraph{}
This paper is organized as follows, in the third section we introduce ordinary Dehn functions as well as higher order Dehn functions and discuss results involving higher order Dehn functions.

In the fourth section we give a survey of generalized handle body diagrams in 2 and 3-dimensions which can be thought of as higher dimensional analogs of van Kampen diagrams. We use transverse maps for this and the main result here is to show that a reduced diagram can be obtained from an unreduced diagram without changing the map on the boundary. Reduced diagrams are a key to obtaining upper bounds for second order Dehn function.

The fifth section introduces the structure of the 3-manifolds which are torus bundles over the circle. We then describe
the cell decomposition of the torus bundles and introduce the notion of dual graphs in the cell decomposition.
Finally we focus on the main examples of this paper which are lattices in the 3-dimensional geometries Nil and Sol.

 The main result in the sixth section is that the isoperimetric inequality involving $Vol^3(D^3)$ and $Vol^2(S^2)$ reduces to an inequality between $||\phi_D||$ and $||\nabla \phi_D||$. We do this by defining a dual graph in the ambient space.

In the last section we use the Varopoulos transport argument to obtain the upper bounds of second order Dehn functions in case of both Nil and Sol.

\section{Basic Notions on Dehn Functions}\label{basic}
In this section we introduce some basic definitions on ordinary and higher dimensional Dehn functions. We also present a short survey of results involving higher dimensional later in the section. The definitions were primarily taken from \cite{Bri} and \cite{Bra}.
\subsection{Dehn Functions}\label{iso}
\begin{definition}(\emph{Dehn function}).
Let $\mathcal{P}= \langle \mathcal{A} \mid \mathcal{R}\rangle$ be the finite presentation of a group $G$, where $\mathcal{A}$ denotes the  set of generators and $\mathcal{R}$ denotes the set of all relators.\\
We can define the \emph{Dehn} \emph{function} of $\mathcal{P}$ in the following way: (\cite{Bri})\\
Given a word $w = 1$ in  generators $\mathcal{A}^{\pm 1}$, \\
$Area(w) = $ min$\{ N_w\in \N : \exists$ an equality $ w= \displaystyle\prod_{i=1}^{N_w} {x_ir_ix_i^{-1}}; x_i\in F(\mathcal{A})$ and $ r_i\in \mathcal{R} \}$, here $F(\mathcal{A})$ denotes the free group on the generating set $\mathcal{A}$.\\
The \emph{Dehn function} of $\mathcal{P}$ is $\delta_{\mathcal{P}}(n) \hspace{0.1cm} = \hspace{0.1cm}$ max$\{Area(w) : |w| \leq n \}$.
\end{definition}


\begin{definition}(\emph{Equivalent Functions}).\label{eq}
Two functions $f,g:[0,\infty)\rightarrow [0, \infty)$ are said to be $\sim$ equivalent if $f\preceq g$ and $g\preceq f$, where $f\preceq g$ means that there exists a constant $C>0 $ such that $f(x)\leq Cg(Cx)+Cx$, for all $x\geq 0$, (and modulo this equivalence relation it therefore makes sense to talk of ``the'' Dehn function of a finitely presented group). This equivalence is called \emph{coarse Lipschitz equivalence}.
\end{definition}


\begin{definition}(\emph{Isoperimetric Function of a Group}).
A function $f:\N\rightarrow \N$ is an \emph{isoperimetric function} for a group $G$ if the Dehn function $\delta_\mathcal{P} \preceq f$ for some
(and hence any) finite presentation $\mathcal{P}$ of $G$.
\end{definition}

Given a smooth, closed, Riemannian manifold $M$, in the rest of this section we shall describe the isoperimetric function of $M$ and discuss its relationship with the Dehn function of the fundamental group $\pi_1(M)$ of $M$.

Let $c:S^1\rightarrow M$ be a null-homotopic, rectifiable loop and define $FArea(c)$ to be the infimum of the areas of all Lipschitz maps
$g:D^2\rightarrow X$ such that $g|_{\partial {D^2}}$ is a reparametrization of $c$. \\
Note that the notion of area used here is the same as that
of area in spaces introduced by Alexandrov \cite{Alex}. The basic idea is to define the area of a surface (or area of a map $g:D^2\rightarrow X$) to be the limiting area of approximating polyhedral surfaces built out of Euclidean triangles.

\begin{definition} (\emph{Isoperimetric or Filling function})
Let $M$ be a smooth, complete, Riemannian manifold. The genus zero, 2-dimensional, isoperimetric function of $M$ is the function $[0,\infty)\rightarrow [0,\infty)$ defined by, $\hspace{0.5cm}{Fill}^M_0(l):= sup\{FArea(c) \hspace{0.2cm}| \hspace{0.2cm}c:S^1 \rightarrow M \hspace{0.1cm}$ null-homotopic, $ length(c)\leq l\} $.
\end{definition}
The Filling Theorem provides an equivalence between Dehn function and the Filling function defined above.
\begin{theorem}[Filling Theorem, Gromov \cite{Gr3}, Bridson, \cite{Bri}]
The genus zero, 2-dimensional isoperimetric function $Fill^M_0$ of any smooth, closed, Riemannian manifold $M$ is $\sim$ equivalent
to the Dehn function $\delta_{\pi_1 M}$ of the fundamental group of $M$.
\end{theorem}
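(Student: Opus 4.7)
The plan is to prove the two inequalities $\delta_{\pi_1 M} \preceq Fill^M_0$ and $Fill^M_0 \preceq \delta_{\pi_1 M}$ separately, using the universal cover $\wtilde M$ as the bridge between the combinatorial and geometric sides. First I would fix a finite smooth triangulation $T$ of $M$, producing a finite presentation $\mathcal{P} = \langle \mathcal{A} \mid \mathcal{R}\rangle$ of $\pi_1 M$ in the standard way: choose a maximal tree in the $1$-skeleton of $T$, take $\mathcal{A}$ to consist of the remaining edges, and take $\mathcal{R}$ to be the boundary words of the $2$-simplices. Lifting $T$ to $\wtilde M$ and invoking the \v{S}varc--Milnor lemma, the Cayley $2$-complex of $\mathcal{P}$ is quasi-isometric to $\wtilde M$; this fixes universal constants $C_1, C_2, A_{\min}, A_{\max}>0$ relating word length to path length and relator area to geometric disc area.

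For the inequality $\delta_{\pi_1 M} \preceq Fill^M_0$: given a null-homotopic word $w$ of length $\leq n$, realize $w$ as an edge loop in $\wtilde M$ of length $\leq C_1 n$. Project this loop to $M$ and apply $Fill^M_0$ to obtain a Lipschitz disc $g\co D^2\rightarrow M$ of area $\leq Fill^M_0(C_1 n)$ filling a reparametrization of that loop. I would then perturb $g$ to be transverse to the $1$-skeleton of $T$ and homotope it to its combinatorial approximation in $\wtilde M$, reading off a van Kampen diagram for $w$. The number of $2$-cells used is bounded above by $Area(g)/A_{\min}$, giving the required bound on $Area(w)$ in the sense of Definition~\ref{eq}.

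For the converse $Fill^M_0 \preceq \delta_{\pi_1 M}$: given a null-homotopic rectifiable loop $c$ of length $\leq l$, I would approximate $c$ by an edge path $c'$ in the $1$-skeleton of $T$ of length $\leq C_2 l$, connecting $c$ to $c'$ by a thin annular Lipschitz region whose area is bounded by a constant multiple of $l$. The edge path spells a word $w$ with $|w|\leq C_2 l$ that is trivial in $\pi_1 M$, so there exists a van Kampen diagram for $w$ with at most $\delta_{\pi_1 M}(C_2 l)$ $2$-cells. Send this diagram into $M$ by mapping each combinatorial $2$-cell to its corresponding $T$-simplex; the resulting Lipschitz disc has area at most $A_{\max}\cdot \delta_{\pi_1 M}(C_2 l)$, and together with the annular collar it fills $c$, giving the desired bound on $FArea(c)$.

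The main obstacle will be the careful treatment of the approximation and smoothing procedures: the rectifiable-to-edge-path approximation requires a uniform $1$-connectedness argument in $\wtilde M$ together with additive area contributions from the annular connector, while the geometric-to-combinatorial direction hinges on a transversality and collapsing argument (in the spirit of van Kampen's original construction) to convert the Lipschitz disc $g$ into an honest diagram over $\mathcal{P}$ without blowing up the area estimate. Both technicalities are routine once set up; the essential content of the theorem is that, once $\wtilde M$ and the Cayley $2$-complex are identified as quasi-isometric, areas transfer across with only bounded multiplicative and additive distortion, which is exactly the coarse Lipschitz equivalence in Definition~\ref{eq}.
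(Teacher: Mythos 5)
The paper does not prove this theorem: it is stated as background and attributed directly to Gromov \cite{Gr3} and Bridson \cite{Bri}, and the paper immediately uses it without further argument. So there is no internal proof to compare against. That said, your outline is the standard argument as it appears in Bridson's survey (the cited source): fix a finite triangulation of $M$, extract a presentation of $\pi_1 M$ from a spanning tree, lift to $\wtilde M$, invoke \v{S}varc--Milnor to identify the combinatorial and geometric sides up to bounded distortion, and transfer fillings in each direction. The one place where you are gesturing rather than proving is the step where a Lipschitz filling disc $g\co D^2\to M$ is converted into a van Kampen diagram without blowing up the area. Transversality to the $1$-skeleton alone is not sufficient when $\dim M > 2$; the disc must actually be \emph{pushed into the $2$-skeleton} of the triangulation by a Federer--Fleming type deformation (radial projection from generic centers of the simplices of dimension $\geq 3$, applied inductively), and one must verify that each such projection increases area by at most a uniform constant depending only on the triangulation. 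Once the image lies in the $2$-skeleton, the bound $Area(w)\leq Area(g)/A_{\min}$ makes sense. Your statement ``the number of $2$-cells used is bounded above by $Area(g)/A_{\min}$'' is the right conclusion but hides this pushing step, which is where the real work in this direction lives. The other direction and the annular-collar estimate are as routine as you indicate, and the overall strategy is correct.
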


\begin{example}
 Here are a few examples of manifolds and their Dehn functions.
\begin{enumerate}
\item The Dehn function of the fundamental group of a compact 2-manifold is linear except for the torus and the Klein bottle when it is quadratic.
 \item The groups that interest us are fundamental groups of 3-manifolds and the Dehn functions of these groups can be characterized using the following theorem by Epstein and Thurston.\\
\emph{Let $M$ be a compact 3-manifold such that it satisfies Thurston's geometrisation conjecture (\cite{Th}).\\
The Dehn function of $\pi_1(M)$ is linear, quadratic, cubic or exponential. It is linear if and only if $\pi_1(M)$  does not contain $\Z^2$.
It is quadratic if and only if $\pi_1(M)$ contains $\Z^2$ but does not contain a subgroup $\Z^2 \rtimes_\phi \Z$ with $\phi \in GL(2,\Z)$ of
infinite order. Subgroups $\Z^2 \rtimes_\phi \Z$ arise only if a finite-sheeted covering of $M$ has a connected summand that is a torus bundle
over the circle, and the Dehn function of $\pi_1(M)$ is cubic only if each such summand is a quotient of the Heisenberg group.}
\end{enumerate}
 \end{example}

\subsection{\textbf{Geometric Interpretation of the Dehn function}.}\label{geom}

\paragraph{}
The connection between maps of discs filling loops in CW complexes (or in other words a geometric interpretation of the Dehn function defined above) and the algebraic method of reducing words can be explained by any one of the following,

\begin{itemize}
  \item van Kampen diagrams (\cite{short}),
  \item pictures (\cite{short})and,
  \item Handle body diagrams (Discussed in Section 3).
\end{itemize}

\subsection{Higher dimensional Dehn functions}

\paragraph{}
Epstein et al. \cite{Ep} and Gromov \cite{Gr2} first introduced higher dimensional Dehn functions at about the same time.
 However later, Alonso \emph{et al.} \cite{Al} and  Bridson \cite{Bri1} provided equivalent definitions which were different from the two mentioned above. In the discussion on higher dimensional Dehn functions presented here we will be using  Brady\emph{ et al}'s (\cite{Bra}) definition which is based on the prior definitions given by Bridson and Alonso\emph{ et al}. Before we introduce higher dimensional Dehn functions we note the definition of groups of type $\mathcal{F}_n$.

 \begin{definition}(\emph{Eilenberg-MacLane complex}, \cite{Bri2})
 The Eilenberg-MacLane complex (or classifying space) $K(\Gamma,1)$ for a group $\Gamma$ is a CW complex with fundamental group $\Gamma$ and contractible universal cover. Such a complex always exists and its homotopy type depends only on $\Gamma$.
 \end{definition}

 \begin{definition}(\emph{Finiteness property $\mathcal{F}_n$}, \cite{wall})
 A group $\Gamma$ is said to be of type $\mathcal{F}_n$ if it has an Eilenberg-MacLane complex $K(\Gamma,1)$ with finite $n$-skeleton.
 Clearly a group is of type $\mathcal{F}_1$ if and only if it is finitely generated and of type $\mathcal{F}_2$ if and only if it is finitely presented.
 \end{definition}

Intuitively, the $k$-\emph{dimensional Dehn function}, $k\geq 1$ ,
is the function $\delta^{(k)}: \N \rightarrow \N$ defined for any group $G$ which is of type $\mathcal{F}_{k+1}$ and $\delta^{(k)}(n)$ measures
the number of $(k+1)$-cells that is needed to fill any singular $k$-sphere in the classifying space $K(G,1)$, comprised of at most $n \hspace{0.2cm} k$-cells.  Up to equivalence the higher dimensional Dehn functions of groups are quasi-isometry invariants.

The following part of this section is devoted to the technical definition of higher dimensional Dehn function given by Brady\emph{ et al.}, (\cite{Bra}).

\begin{notation}
Henceforth we will denote an $n$-dimensional disc (or ball) by $D^{n}$ and an $n$-dimensional sphere by $S^{n}$.
\end{notation}

\begin{definition} (\emph{Admissible maps})\label{adm}
Let $W$ be a compact $k$-dimensional manifold and $X$ a CW complex,
an admissible map is a continuous map $f: W \rightarrow X^{(k)} \subset X$ such that $f^{-1}(X^{(k)}- X^{(k-1)})$ is a
disjoint union of open $k$-dimensional balls, each mapped by $f$ homeomorphically onto a $k$-cell of $X$.
\end{definition}

\begin{definition} \label{volume}
(\emph{Volume of $f$}) If  $f: W \rightarrow X$ is admissible we define the volume of $f$, denoted by $Vol^{k}(f)$, to be
the number of open $k$-balls in $W$ mapping to $k$-cells of $X$.
\end{definition}

Given a group $G$ of type $\mathcal{F}_{k+1}$, fix an aspherical CW complex $X$ with fundamental group $G$
and finite $(k+1)$-skeleton. Let $\widetilde{X}$ be the universal cover of $X$. If $f: S^k \rightarrow \widetilde{X}$ is an
\emph{admissible} map, define the \emph{filling volume} of $f$ to be the minimal volume of an extension of $f$ to $D^{k+1}$ in the following way,
FVol$(f) \hspace{0.2cm} = \hspace{0.2cm}$min$\{Vol^{k+1}(g)\hspace{0.1cm}|\hspace{0.1cm}g: D^{k+1}\rightarrow \widetilde{X}, g|_{\partial D^{k+1}} =f \}$, then, $k-$dimensional Dehn function of $X$ is $\delta^{(k)}(n) \hspace{0.2cm} = \hspace{0.2cm} $sup$\{$ FVol$(f)\hspace{0.1cm} |\hspace{0.1cm}f: S^{k}\rightarrow \widetilde{X}, Vol^{k}(f)\leq n \}$.

\begin{remark} Here are a few observations about higher dimensional Dehn functions,
\begin{enumerate}
  \item Up to equivalence, $\delta^{(k)}(n)$ is a quasi-isometry invariant.
  \item In the above definitions it is possible to use $X$ in place of $\widetilde{X}$ since $f: S^k \rightarrow {X}$ (or $f: D^{k+1} \rightarrow {X}$) and their lifts to $\widetilde{X}$ have the same volume.
\end{enumerate}

\end{remark}
All the groups discussed in this paper is at most 3-dimensional so we will restrict $k$ in the above definitions such that $k\leq 2$.



The following are examples of second order Dehn functions.
\begin{example}
(\emph{Examples of groups and their second-order Dehn functions}):

\begin{enumerate}
  \item By definition,the second order Dehn function of a 2-complex with contractible universal cover is linear.

  \item The second order Dehn function of any group of every (word) hyperbolic group $H$ is linear and so is the direct product of $H$ with any finitely generated free group, both these results were established by Alonso \emph{et al.} in \cite{Bog}.


  \item The second order Dehn function of any finitely generated abelian group with torsion-free rank greater that two is $\sim n^{3/2}$,
            e.g, $\Z^3 $ (\cite{Wang2}).

\end{enumerate}
\end{example}

\section{Transverse Maps, Handle Decompositions and Reduced Diagrams}\label{maps}
In this section we will discuss generalized handle decompositions which will help us compute upper bounds of higher dimensional Dehn functions in specific cases later in the paper.

\newpage
\subsection{Background on Handle Decompositions}

\paragraph{}
Any compact, smooth or piecewise linear manifold, admits a handle decomposition (\cite{Mil}, \cite{Rk}), also each handle decomposition can be made proper (see details in \cite{Rk}). In 1961 S. Smale \cite{Sm}, established the existence of exact handle decompositions of simply connected and cobordisms of dimensionality $n\geq 6$.

In this paper we will be using the generalized handle decomposition of manifolds, mainly due to
Buoncristiano, Rourke, Sanderson, \cite{Bu}.
 This reference by Buoncristiano, Rourke, Sanderson (\cite{Bu}) is a lecture series on a geometric approach to homology theory.

Here they introduce the concept of transverse CW complexes. These complexes have all the same properties of ordinary cell complexes. The result from this article which we will be using in this paper is known as the Transversality Theorem, and using this theorem any continuous map may be homotoped to a transverse map (Definition \ref{transverse}). Here is the statement of the Transversality theorem, this theorem is used to show the maps from the handle decompositions we construct to the ambient space are transverse.

\begin{theorem} [Buoncristiano, Rourke and Sanderson, \cite{Bu}]
Suppose $X$ is a transverse CW complex (a CW complex is transverse if each attaching map is transverse to the skeleton to which it is mapped), and
$f:M \rightarrow X$ is a map where $M$ is a compact piecewise linear manifold. Suppose $f\mid_{\partial M}$ is transverse, then there is a homotopy of $f\hspace{0.1cm} rel \hspace{0.1cm} \partial M$ to a transverse map.
 \end{theorem}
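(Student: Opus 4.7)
The plan is a cell-by-cell induction on the cells of $X$, ordered by increasing dimension, producing a homotopy of $f$ rel $\partial M$ that makes $f$ transverse to one more cell at each stage. First I would fix a collar $\partial M \times [0,1]$ of $\partial M$ in $M$ and require every homotopy to be the identity on a slightly smaller collar, so that transversality of $f|_{\partial M}$ is preserved automatically. The hypothesis that $X$ is a transverse CW complex supplies, for each open cell $e$, a regular neighborhood $c_e$ with a product-like fibration over $e$ whose fibers are cones on the link of $e$ in $X$; transversality of $f$ to $e$ then means that the preimage of a generic fiber of this projection is a PL submanifold of $M$ of codimension $\dim e$ on which $f$ is locally a product projection onto the cone fiber.

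For the inductive step, suppose $f$ has already been made transverse to every cell of dimension less than $k$, and fix a $k$-cell $e$. After simplicially approximating $f$ on a sufficiently fine triangulation of $M$ compatible with a triangulation of $c_e$, I would invoke the standard PL general position lemma to perturb the simplicial map so that the preimage of the cocore of $e$ is a PL submanifold of the correct codimension. A further local straightening in a regular neighborhood of this preimage, using the cone-fiber structure on $c_e$, produces a genuine product projection there. Both perturbations can be taken arbitrarily small in the PL sense and supported away from the cocores of previously processed cells, and they extend by the identity elsewhere to a homotopy of the full map.

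The main obstacle is exactly this compatibility issue: the modification over $e$ must not destroy the transversality already achieved over any lower-dimensional cell whose closure meets $e$, and must remain the identity on $\partial M$. This is where the defining property of a transverse CW complex becomes essential — the attaching map of $e$ is itself transverse to $X^{(k-1)}$, which forces the cocone neighborhoods of different cells to intersect in a controlled, stratified manner — so perturbations over $e$ can be confined to a neighborhood of $f^{-1}(c_e)$ that meets the cocores of adjacent lower cells only in their own, already-stable, product structure. Compactness of $M$ together with the finiteness of each skeleton of $X$ (or, when $X$ is infinite, local finiteness combined with compactness of the image of $f$) ensures that the induction reduces to finitely many steps; composing the resulting homotopies gives the required transverse map homotopic to $f$ rel $\partial M$.
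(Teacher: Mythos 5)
This is a quoted result, not one the paper proves. Theorem~\ref{Ttheorem} (and its sibling Lemma~\ref{complex}) are taken verbatim from Buoncristiano--Rourke--Sanderson \cite{Bu} and used as black boxes; the paper contains no argument for it, so there is nothing internal to compare your sketch against. What you have written should therefore be measured against the notion of transversality that this paper actually uses, and there it falls short in two concrete ways.

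First, your transversality condition --- ``the preimage of a generic fiber of the cocone projection is a PL submanifold of the correct codimension on which $f$ is locally a product'' --- is not the definition in play. Definition~\ref{transverse} is handle-theoretic: $f\colon M\to X$ is transverse when $M$ admits a \emph{generalized} handle decomposition in which every $i$-handle $H^i=\Sigma^i\times D^{n-i}$ (with $\Sigma^i$ an arbitrary compact $i$-manifold, not necessarily a disc) satisfies $f|_{H^i}=\phi\circ\mathrm{pr}_2$ for some characteristic map $\phi$ of an $(n-i)$-cell. Producing PL-submanifold preimages of cocores is necessary but not sufficient: one must actually build the filtration $M^{(-1)}\subset M^{(0)}\subset\cdots\subset M^{(n)}=M$ with the stated product structure on each handle, and show it is consistent with the restriction $f|_{\partial M}$. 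Your sketch never constructs this filtration, and a general-position perturbation alone does not supply it. Second, the compatibility claim at the end of your second and third paragraphs --- that perturbing over a $k$-cell ``can be confined'' so as not to disturb transversality over the lower cells in its closure, because adjacent cocone neighborhoods ``intersect in a controlled, stratified manner'' --- is exactly the technical heart of the BRS argument and cannot simply be asserted; without a precise statement (in their framework this is managed via the dual block / mock-bundle structure on $X$ and a careful choice of regular neighborhoods) this is a genuine gap, not a routine bookkeeping step. If you want a proof, reproduce the one in \cite{Bu} rather than attempting to graft classical PL general position onto a definition it was not designed for.
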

In fact, if $M$ is a generalized handle decomposition i.e, it is constructed from another manifold with boundary $M_0$, by attaching finite number of generalized  handles, then the map $f$ itself is homotopic to a transverse map.

\subsection{Handlebody Diagrams}\label{transverse1}

\paragraph{}
The following definitions and statement of Transversality theorem were taken from the lecture notes of a course \cite{For} taught by Max Forester at the University of Oklahoma.

\begin{definition} \label{handle}
(\emph{Index i-Handle}) An index $i$-handle is written as $H^i=\Sigma^i \times D^{n-i}$, where $\Sigma^i$ is a connected $i$-manifold (we will consider $\Sigma^i = D^i$ in all our examples) and $D^{n-i}$ is a $(n-i)$ closed disk.
\end{definition}

\textbf{Note}: The boundary of a $i$-handle is $\partial H^i = \partial \Sigma^i \times D^{n-i} \cup \Sigma^i \times \partial D^{n-i}$.\\

Given an $n$-manifold $M_0$ with boundary and an $i$-handle $H^i$, let $\phi: \partial \Sigma^i \times D^{n-i} \rightarrow \partial M_0$ be an
embedding. Form $M_0 \cup_\phi H^i$ a new manifold with boundary obtained from $M_0$ by attaching an $i$-handle in the following way,
 $(M_0 \amalg H^i)/(x \sim \phi(x), \forall x \in \partial \Sigma^i \times D^{n-i})$.

\begin{definition} \label{filtratiion}
(\emph{Generalized Handle Decomposition}) A generalized handle decomposition of $M$ is a filtration:
$\emptyset = M^{(-1)} \subset M^{(0)} \subset M^{(1)} \subset ......\subset M^{(n)} = M$ such that:
\begin{itemize}
\item Each $M^{(i)}$ is a codimension-zero submanifold of $M$.
($L \subset M$ is a codimension-zero submanifold if $L$ is an $n$-manifold with boundary and $\partial L$ is a submanifold of $M$.)
\item $M^{(i)}$ is obtained from $M^{(i-1)}$ by attaching finitely many $i$-handles.
\end{itemize}
\end{definition}
\begin{remark}
In case $M$ is a compact $n$-manifold with boundary denoted by, $\partial M$, then the generalized handle decomposition of $M$ is:
\begin{itemize}
\item A generalized handle decomposition of $\partial M$, namely: $\emptyset =N^{(-1)} \subset N^{(0)} \subset N^{(1)} \subset ......\subset N^{(n-1)} = \partial M$, where each $M^{(i)}$ is a codimension-zero submanifold of $\partial M$
     \item A filtration of $M$, $\emptyset = M^{(-1)} \subset M^{(0)} \subset M^{(1)} \subset ......\subset M^{(n)} = M$ where each $M^{(i)}$ is a codimension-zero submanifold of $M$ and $M^{(i)}$ is obtained from $M^{(i-1)}\cup N^{(i-1)}$ by attaching $i$-handles.
    \item Each $(i-1)$-handle of $N$ is a connected component of the intersection of $N$ with an $i$-handle of $M$ (this means that $N^{(i-1)}=\partial M\cap M^{(i)}$).
\end{itemize}
\end{remark}
\begin{definition} \label{transverse}
(\emph{Transverse Maps}) Let $M$ be a compact $n$-manifold and $X$ a cell-complex. A continuous map $f:M \rightarrow X $ is \emph{transverse} if $M$ has a generalized handle decomposition such that for every handle $H^i=\Sigma^i \times D^{n-i}$ in $M$, the restriction $f\mid _{H^i} : \Sigma^i \times D^{n-i} \rightarrow X$ is given by $\phi \circ {pr}_2$ where ${pr}_2: \Sigma^i \times D^{n-i} \rightarrow D^{n-i}$ is a projection map
to the second coordinate and $\phi$ is the characteristic map of an $(n-i)$-cell of $X$. We will refer to the generalized handle decomposition of $M$ as a \emph{handle body diagram} or just a \emph{diagram}.
\end{definition}

\textbf{Note} An $i$-handle maps to a $(n-i)$-cell this implies, $f(M) \subset X^{(n)} =X$.

\begin{definition} (\emph{``good''  CW complex})\label{good}
A CW complex is ``good'' if and only if, each attaching map is transverse to the skeleton to which it is mapped.
\end{definition}
Next we have a version of the Transversality theorem which we will refer to later in this paper.

\begin{theorem}[Transversality Theorem \cite{Bu}]\label{Ttheorem}
If $X$ is an $n$-dimensional, ``good''  CW complex and $M$ is a generalized handle decomposition of a compact $n$-manifold, then every continuous map $f: M \rightarrow X$ is homotopic to a transverse map $g$. Moreover, if $f \mid _{\partial M}$ is transverse, then there is a homotopy  of $f$ rel $\partial M$ to a transverse map.
\end{theorem}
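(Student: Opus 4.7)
The plan is to build the required handle decomposition of $M$ and the homotopy of $f$ simultaneously, by downward induction on the dimension of cells in $X$ (equivalently, upward induction on the index of handles in $M$). After stage $i$, I will have homotoped $f$ so that it is transverse to every cell of $X$ of dimension greater than $n-i$, and will have produced the corresponding $0$-, $1$-, $\ldots$, $(i-1)$-handles in $M$; on the complement of these handles the homotoped $f$ lands in $X^{(n-i)}$. The desired transverse map $g$ is produced at stage $n+1$.

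The base stage processes the top-dimensional cells. For each $n$-cell $e$ of $X$, choose a point in its interior together with a small closed PL disk $B_e$ around it. Classical PL transversality of a continuous map from a PL manifold to a PL manifold lets me homotope $f$ so that $f^{-1}(B_e)$ is a disjoint union of $n$-balls in $M$, each mapped homeomorphically (up to reparametrization) onto $B_e$; these become the $0$-handles, and on them $f$ agrees with the characteristic map of $e$ as required by Definition \ref{transverse}. Composing the restriction of $f$ to the complement with the natural deformation retraction of $X \setminus \bigsqcup_e {\rm int}(B_e)$ onto $X^{(n-1)}$ pushes the rest of $f$ into the $(n-1)$-skeleton without disturbing the $0$-handles just constructed. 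The inductive step for $(n-i)$-cells is analogous: the hypothesis that $X$ is ``good'' means each attaching map $\phi_e \co \partial D^{n-i}\to X^{(n-i-1)}$ is itself transverse, so a neighborhood of $e$ inside $X^{(n-i)}$ has a projection-like local structure (a PL analogue of a tubular neighborhood) along which $f$ can be made transverse, yielding finitely many $i$-handles of the prescribed product form $\Sigma^i \times D^{n-i}$ on which $f$ factors as projection followed by the characteristic map of $e$.

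The main obstacle will be making sure the successive stages do not interfere with one another. This should be handled by observing that at stage $i$ the required homotopy is supported inside the preimage of a small open neighborhood of the union of $(n-i)$-cells, a region already disjoint from the handles produced in earlier stages (those were carved out of preimages of small disks in strictly higher-dimensional cells). For the relative statement, start the induction with the handle decomposition of $\partial M$ supplied by the given transversality of $f|_{\partial M}$; at each stage use the rel-boundary version of PL transversality to perform the homotopy without moving $f$ on $\partial M$, so that the $(i-1)$-handles already present on $\partial M$ appear as faces of the $i$-handles constructed on $M$. This yields a generalized handle decomposition of the pair $(M,\partial M)$ that restricts to the given one on $\partial M$, completing the argument.
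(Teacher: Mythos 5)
The paper does not prove this theorem: it is stated in Section~\ref{maps} as a citation to Buoncristiano, Rourke and Sanderson \cite{Bu} (and is stated once more, verbatim in substance, earlier in the same section, again with only the reference). So there is no proof in the paper to compare against; your attempt is a reconstruction from scratch, and must be judged on its own terms.

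On those terms your outline follows the right strategy --- the inductive, skeleton-by-skeleton construction coupled to a handle-by-handle build-up of $M$ is precisely the mechanism used in \cite{Bu} --- but there is a genuine gap at the base step. You assert that ``classical PL transversality of a continuous map from a PL manifold to a PL manifold'' lets you homotope $f$ so that $f^{-1}(B_e)$ is a disjoint union of $n$-balls, each mapped homeomorphically onto $B_e$. That is stronger than what codimension-zero PL transversality provides: transversality to the codimension-zero disk $B_e$ (or equivalently to the sphere $\partial B_e$) only yields that $f^{-1}(B_e)$ is a compact codimension-zero PL submanifold of $M$ with boundary $f^{-1}(\partial B_e)$. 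There is no reason its components should be balls, nor that $f$ should restrict to a homeomorphism on any of them; a $0$-handle structure over $e$ is equivalent to $f$ having local degree $\pm 1$ near each component, and arranging this (by cancelling extraneous preimages, pushing excess material into $\partial B_e$ and then out to $X^{(n-1)}$) is the substantive content of the Buoncristiano--Rourke--Sanderson transversality machinery. A similar over-reach recurs in the inductive step, where you appeal to a ``projection-like local structure'' near a lower cell; the precise formulation of that structure --- the fact that the goodness hypothesis endows a neighborhood of each cell of $X^{(n-i)}$ with a mock-bundle (block-bundle) structure --- is exactly what would have to be established before the homotopy can be performed. So the architecture of your argument is sound and matches the source, but the two places marked ``lets me homotope'' and ``should be handled by'' are currently assertions, not proofs, and each conceals the technical core of the theorem.
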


\begin{lemma}\label{complex}
Every cell-complex is homotopy equivalent to a ``good'' cell-complex.
 \end{lemma}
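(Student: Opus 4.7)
The plan is to induct on the dimension of the skeleton, at each stage replacing the given attaching maps by homotopic transverse ones using the Transversality Theorem, and then invoking the standard fact that homotopic attaching maps produce homotopy equivalent cell complexes.

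First I would set up the base of the induction. The $0$-skeleton $X^{(0)}$ is a discrete set of points; there are no attaching maps to worry about, so it is trivially ``good.'' For the inductive step, suppose by induction that we have already constructed a ``good'' CW complex $Y^{(n-1)}$ together with a homotopy equivalence $h\co Y^{(n-1)}\to X^{(n-1)}$. For each $n$-cell $e_\alpha$ of $X$, let $\phi_\alpha\co S^{n-1}\to X^{(n-1)}$ be its attaching map. Give $S^{n-1}$ a standard generalized handle decomposition (as the boundary of $D^n$). Since $Y^{(n-1)}$ is ``good'' in the sense of Definition \ref{good}, the Transversality Theorem \ref{Ttheorem} applies to any continuous map $S^{n-1}\to Y^{(n-1)}$. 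I would apply a homotopy inverse of $h$ (or a cellular approximation thereof) to transport $\phi_\alpha$ to a map $\widetilde{\phi}_\alpha\co S^{n-1}\to Y^{(n-1)}$, and then use Theorem \ref{Ttheorem} to homotope $\widetilde{\phi}_\alpha$ to a transverse map $\phi'_\alpha$.

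Next I would form $Y^{(n)}$ by attaching one $n$-cell to $Y^{(n-1)}$ along each $\phi'_\alpha$. By construction every attaching map at level $n$ is transverse, and the attaching maps at lower levels were already transverse by the inductive hypothesis, so $Y^{(n)}$ is ``good.'' The homotopy equivalence $Y^{(n)}\simeq X^{(n)}$ follows from the standard gluing lemma for CW pairs: if $A\simeq A'$ and attaching maps $\phi,\phi'\co S^{n-1}\to A$ are homotopic, then $A\cup_\phi D^n\simeq A'\cup_{\phi'}D^n$; iterating once over all $n$-cells (or using the pushout description directly) propagates the equivalence $h$ to an equivalence $h_n\co Y^{(n)}\to X^{(n)}$ extending $h$ up to homotopy.

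Finally, if $X$ is infinite-dimensional I would take the direct limit $Y=\operatorname{colim}Y^{(n)}$ along the inclusions built in the inductive step; the compatible maps $h_n$ assemble to a map $Y\to X$ which is a weak equivalence (hence a homotopy equivalence of CW complexes by Whitehead's theorem) since it is a homotopy equivalence at every finite stage and homotopy groups commute with this colimit. The main obstacle is to make the inductive step precise: one must pick handle decompositions of the spheres $S^{n-1}$ that are compatible with the already-constructed handle decomposition of $Y^{(n-1)}$ so that the transverse maps $\phi'_\alpha$ genuinely endow $Y^{(n)}$ with the transverse CW structure required by Definition \ref{good}, rather than merely being transverse in a pointwise sense. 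Once this bookkeeping is set up, the rest of the argument is a formal invocation of Theorem \ref{Ttheorem} and the gluing lemma for cell attachments.
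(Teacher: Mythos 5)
The paper does not give a proof of Lemma~\ref{complex}; it is stated without argument as a known consequence of the transverse CW theory of Buoncristiano--Rourke--Sanderson \cite{Bu} (recalled from \cite{For}). Your inductive argument is a correct and natural way to supply one. You build a ``good'' complex $Y$ skeleton by skeleton, transport each attaching map of $X$ through the inductively produced homotopy equivalence, homotope it to a transverse map using Theorem~\ref{Ttheorem} --- whose dimension hypotheses are satisfied at every stage, since the attaching map of an $n$-cell is a map from the compact $(n-1)$-manifold $S^{n-1}$ to the $(n-1)$-dimensional good complex $Y^{(n-1)}$ --- and upgrade to a homotopy equivalence $Y^{(n)}\simeq X^{(n)}$ via the standard gluing lemma for cell attachments, passing to a colimit and invoking Whitehead's theorem when $X$ is infinite-dimensional.

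The ``bookkeeping obstacle'' you flag at the end is in fact not one. Definition~\ref{good} only requires that each individual attaching map be transverse, each with its own witnessing generalized handle decomposition of its domain sphere; it imposes no compatibility condition between the decompositions chosen for distinct cells, nor any coherence between the decomposition of an attaching sphere and the handle structure induced on the skeleton to which it maps. So once each $\phi'_\alpha$ has been homotoped to a transverse map via Theorem~\ref{Ttheorem}, the complex $Y^{(n)}$ is good by fiat and your inductive step closes exactly as you sketched it. The only point worth making explicit is the one you already note for the base case: a map from $S^0$ into a discrete set is trivially transverse, so $Y^{(0)}=X^{(0)}$ starts the induction.
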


 \begin{definition}(\emph{Unreduced Diagram}). \label{reduced}
A  diagram  $f: (D^n,S^{n-1}) \rightarrow K$ is said to be \emph{unreduced } if in the interior of $(D^n,S^{n-1})$ there exists two 0-handles $H^0_1$ and $H^0_2$ joined together by a 1-handle such that, $f(H^0_1)=f(H^0_2)$ is an open $n$-cell in $K$ and $({f|_{H^0_1}}^{-1}\circ f|_{H^0_2})$ is an orientation reversing map. Otherwise, the diagram is said to be \emph{reduced}.
\end{definition}

 In other words, a diagram is \emph{unreduced} if there exists another diagram with the same boundary length or area (in case of 2 or 3-dimensional cases respectively) but strictly smaller filling area or volume for 2 or 3-dimensional cases respectively.
   Under these circumstances we will eliminate these 0-handles along with the 1-handle connecting them but keeping the boundary of the diagram same and ensuring that we still have a disc. Hence, our intention is to get a \emph{reduced} diagram from an \emph{unreduced} one.

   Next we will discuss how to obtain a reduced diagram from an unreduced one. This argument was given by Brady and Forester (\cite{For1}).\\

\begin{example}\label{annul}
Let $f: (D^n, S^{n-1}) \rightarrow K$ be an admissible map, and let $H^0_1$ and $H^0_2$ be 0-handles in $(D^n, S^{n-1})$ connected together with a 1-handle. Let $\alpha$ be a core curve in the 1-handle connecting $H^0_1$ and $H^0_2$ homeomorphic to an interval (Figure \ref{alpha}). Suppose $f$ maps $\alpha$ to a point and maps $H^0_1$ and $H^0_2$ to the same $n$-cell, with opposite orientations. As $H^0_1$ and $H^0_2$ are 0-handles, there are homeomorphisms
$h_i:(H^0_i, \partial H^0_i)\rightarrow (D^n,S^{n-1}) $ such that $f|_{H^0_i}= \phi \circ h_i$ for some characteristic map $\phi :(D^n,S^{n-1}) \rightarrow K$. We first consider the curve $\alpha$ along with  a tubular neighborhood around it and collapse it to a point to get part $(ii)$ of Figure \ref{alpha}.
Next remove the interiors
of $H^0_i$ from $(D^n, S^{n-1})$ and form a quotient $(D^n_1, S^{n-1}_1)$ by gluing boundaries via $h_0^{-1}\circ h_1$, an orientation reversing map. The new space maps to $K$ by $f$, and there is a homeomorphism $g:(D^n, S^{n-1}) \rightarrow (D^n_1, S^{n-1}_1)$. Now $f\circ g$ is an admissible map $(D^n, S^{n-1}) \rightarrow K$ with two fewer 0-handles. The map can be then be made transverse with the rest of the 0-handles unchanged. Figure \ref{alpha} illustrates the method pictorially.

\begin{figure}[ht!]
\labellist
\small\hair 2pt
\pinlabel $H_2^0$  at 170 239
\pinlabel $H_1^0$  at 314 239
\pinlabel $\alpha$  at 262 230
\pinlabel $(i)$  at 240 181
\pinlabel $(ii)$  at 136 -6
\pinlabel $(iii)$  at 451 -6
\pinlabel $f$  at 141 192
\pinlabel $g$  at 293 91
\pinlabel $H_2^0$  at 80 44
\pinlabel $H_1^0$  at 162 46
\endlabellist
\centering
\includegraphics[scale=0.65]{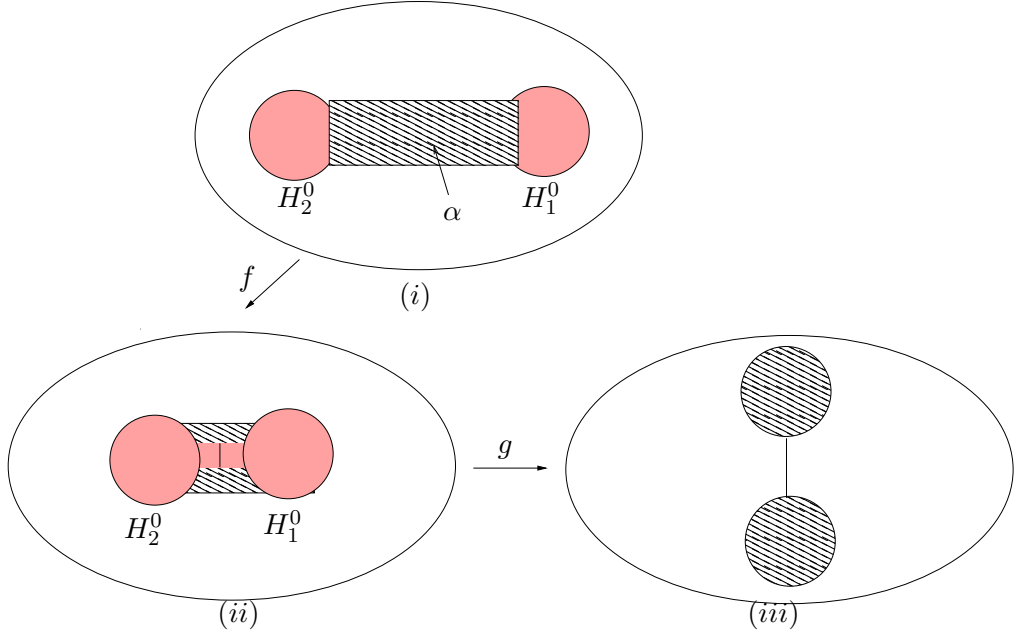}
\caption{$(i)$ two 0-handles joined by a 1-handle and core curve $\alpha$, $(ii)$ Picture of $(i)$ after $\alpha$ has been removed, and $(iii)$ Final Picture}
\label{alpha}
\end{figure}

 \end{example}

  \section{The connection between Linear Algebra and Cell Decomposition of
Mapping Tori} \label{decomposition}
In this section we discuss the structure of the 3-dimensional manifolds that have the lattices of the Nil and Sol geometries as fundamental  groups.

The 3-manifolds considered here are the mapping tori where the attaching maps corresponds to matrices
in $SL_2(\Z)$. In other words, given a  group of the form $\Z^2 \rtimes_{\psi_A} \Z$,
 where $\psi_A \in Aut(\Z^2)$ and $A \in SL_2(\Z)$, the geometric realization of these groups are mapping tori where the attaching maps are the automorphisms of $\Z^2$. For
example if $\psi_A$ is the identity map then, the corresponding space is
$\Z^3 \subset \R^3$. Other specific examples we are interested in are the lattices in the 3-dimensional geometries Nil and Sol. In
particular we will be looking at lattices corresponding to the matrix $\left(%
\begin{array}{cc}
  1 & 1 \\
  0 & 1 \\
\end{array}%
\right)$ for Nil and $\left(%
\begin{array}{cc}
  2 & 1 \\
  1 & 1 \\
\end{array}%
\right)$ in case of Sol.
Another way of looking at these are as torus bundles over the circle and they are described below.

Let us denote the mapping torus $\frac{T \times I}{(t,0)\sim
(\psi_A(t),1)} $ by $A_T$, where $\psi_A$ is the attaching map.
Let $\psi_A$ be represented by the matrix $A\equiv$ $\left(%
\begin{array}{cc}
  x & z\\
  y & w \\
\end{array}%
\right)\in SL_2(\Z)$ . So, if the generating curves of the torus
in $A_T$ are labeled $a,b$, then the presentation of the corresponding fundamental group is given by,
$ \Gamma=\langle a,b, t \mid [a,b],
tat^{-1}=A(a)=a^xb^y,\hspace{0.15cm}tbt^{-1} = A(b)=a^zb^w \rangle$.\\

\subsection{Cell Decomposition of the Mapping Torus $A_T$}

\paragraph{}
We know that the mapping torus $A_T$ consists of two copies of the
torus attached via the map $\psi_A$.
 Here we will demonstrate an effective way of
triangulating the 2-cell spanned by the generators of the group $\Gamma$ and hence obtain a model space for $\Gamma$.

We subdivide the 2-cells of both copies of the torus in $A_T$ into either a number of triangular faces or a combination of triangular and quadrilateral faces.
The following example illustrates this process in details.
\begin{example}\label{lattice1}
Let $A \equiv \left(%
\begin{array}{cc}
  1 & 1 \\
  0 & 1 \\
\end{array}%
\right)$, then the corresponding group is the 3-dimensional, integral Heisenberg
group $\mathcal{H} = \langle a,b,t \hspace{0.15cm}\mid
\hspace{0.15cm} [a,b],\hspace{0.1cm} tat^{-1}=a,\hspace{0.1cm}
tbt^{-1}=ab \rangle$.

\begin{figure}[ht!]
\labellist
\small\hair 2pt
\pinlabel $a$  at 152 192
\pinlabel $b$  at 196 154
\pinlabel $a$  at 152 110
\pinlabel $b$  at 110 153
\pinlabel $a$  at 396 110
\pinlabel $b$  at 500 157
\pinlabel $a$  at 458 190
\pinlabel $b$  at 356 153
\pinlabel $\psi_A$  at 274 133
\endlabellist
\centering
\includegraphics[scale=0.65]{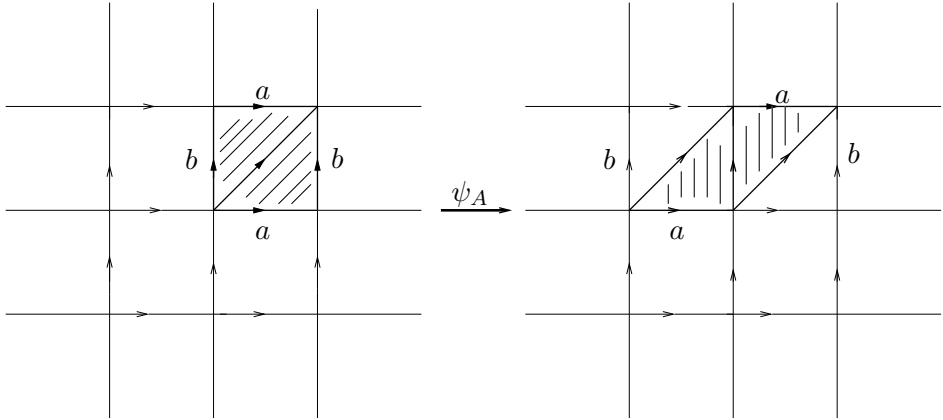}

\caption{Sub-division of $\R^2$ under the action of the map $\psi_A$}
\label{grid}
\end{figure}



\begin{figure}[ht!]
\labellist
\small\hair 2pt
\pinlabel $A$ at -5 -10
\pinlabel $B$ at -4 110
\pinlabel $C$ at 55 160
\pinlabel $D$ at 146 -5
\pinlabel $E$ at 206 51
\pinlabel $F$ at 206 160
\pinlabel $G$ at 145 105
\pinlabel $H$ at 58 58
\pinlabel $a$ at 81 24
\pinlabel $b$ at 122 66
\pinlabel $c$ at 28 31
\pinlabel $t$ at 215 105
\pinlabel $b$ at 65 -5
\pinlabel $b$ at 125 162
\pinlabel $a$ at 24 136
\pinlabel $a$ at 172 119
\pinlabel $c$ at 83 134
\pinlabel $c$ at 164 20
\pinlabel $b$ at 82 98
\pinlabel $t$ at -5 58

\endlabellist
\centering
\includegraphics[scale=0.65]{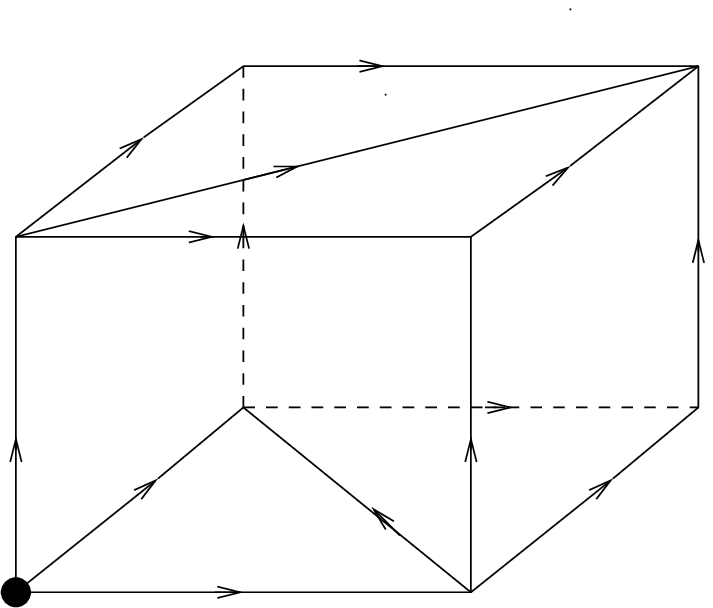}

\caption{The mapping torus corresponding to the matrix $A$ above.} \label{nil}
\end{figure}

This subdivision  of the mapping tori below, (Figure \ref{nil})  shows that
 the top has been divided into two triangular faces each of which can be mapped via $\psi_A$ to their
 exact replicas in base. The 3-cell in (Figure \ref{nil}) also serves as the fundamental domain for the action of $\mathcal{H}$ on the corresponding universal cover. The base point is named $A$ and all other vertices of the cell are also labeled.
\end{example}

\begin{example}\label{lattice2}
If we have the matrix $B \equiv \left(%
\begin{array}{cc}
  2 & 1 \\
  1 & 1 \\
\end{array}%
\right)$, then the corresponding group presentation is $\mathcal{S} =
\langle a,b,t \hspace{0.15cm}\mid \hspace{0.15cm}
[a,b],\hspace{0.1cm} tat^{-1}=a^2b,\hspace{0.1cm} tbt^{-1}=ab
\rangle$. \\

\begin{figure}[ht!]
\labellist
\small\hair 2pt
\pinlabel $b$ at 120 81
\pinlabel $b$ at 45 81
\pinlabel $a$ at 75 43
\pinlabel $a$ at 81 114
\pinlabel $b$ at 386 73
\pinlabel $b$ at 383 133
\pinlabel $a$ at 401 99
\pinlabel $a$ at 347 101
\pinlabel $a$ at 458 101
\pinlabel $\psi_B$ at 240 117
\endlabellist
\centering
\includegraphics[scale=0.65]{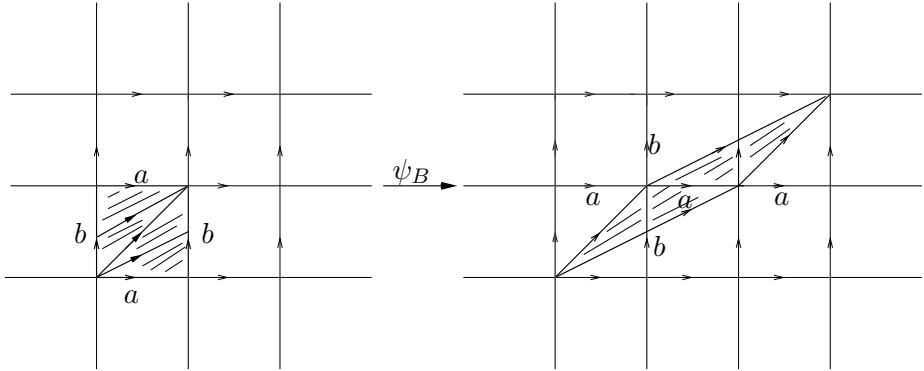}

\caption{Sub-division of two copies of $\R^2$ under the action of  $\psi_B$  .} \label{solgrid1}
\end{figure}

 Again (Figure \ref{solgrid1}) above shows that the subdivision is
 compatible to the relations in the group presentation
 $\mathcal{S}$.

\begin{figure}[ht!]
\labellist
\small\hair 2pt
\pinlabel $A$ at 4 -4
\pinlabel $B$ at -2 100
\pinlabel $C$ at 31 157
\pinlabel $D$ at 166 -2
\pinlabel $E$ at 195 51
\pinlabel $F$ at 197 157
\pinlabel $G$ at 173 97
\pinlabel $H$ at 26 51
 \pinlabel $b_1$ at 141 34
 \pinlabel $b_2$ at 44 24
 \pinlabel $b_1$ at 2 115
 \pinlabel $b_2$ at 19 146
 \pinlabel $b_1$ at 180 110
 \pinlabel $b_2$ at 190 140
 \pinlabel $c_1$ at 148 116
 \pinlabel $c_2$ at 63 141
 \pinlabel $c_1$ at 66 58
 \pinlabel $c_2$ at 144 58
 \pinlabel $c_1$ at 34 -2
 \pinlabel $c_2$ at 120 -2
 \pinlabel $a$ at 95 19
 \pinlabel $a$ at 99 155
 \pinlabel $t$ at 172 71
 \pinlabel $t$ at 200 91
 \pinlabel $t$ at 0 69
 \pinlabel $M_1$ at 185 125
 \pinlabel $M_2$ at 5 131
 \pinlabel $M_3$ at 100 57
 \pinlabel $M_4$ at 81 -2
 \pinlabel $d$ at 55 126
\pinlabel $d$ at 14 31
\pinlabel $d$ at 189 26
%
 \endlabellist
\centering
\includegraphics[scale=0.85]{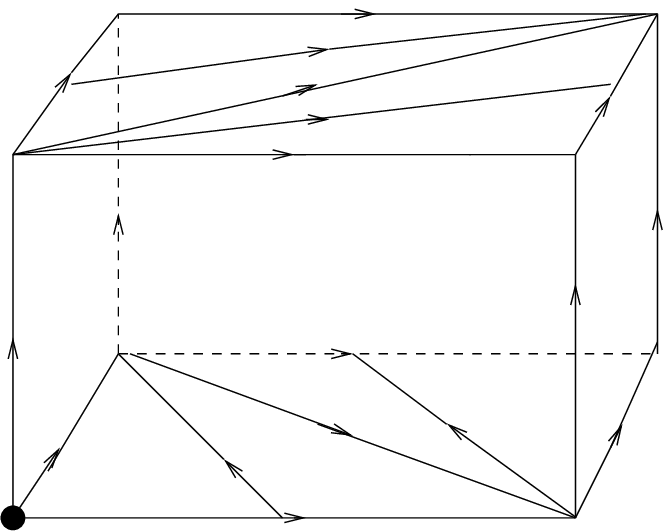}

\caption{The mapping torus corresponding to the matrix $B$ above.} \label{sol}
\end{figure}

 This triangulation  of the mapping tori below, (Figure \ref{sol})  shows that
 the top has been divided into four triangular faces each of which can be mapped via $\psi_B$ to their
exact replicas in base.
 This 3-cell serves as the fundamental domain for the action of $\mathcal{S}$ on the corresponding universal cover.
\end{example}

\section{Upper Bounds- Reduction to Varopoulos Isoperimetric Inequality}
   Sections 6 and 7 are devoted to obtaining upper bounds for the second order Dehn functions of $\mathcal{H}$ and $\mathcal{S}$ using a variation of Varopoulos Transport argument.

   In section 6 we reduce the original isoperimetric problem involving volume of 3-balls and areas of their boundary 2-spheres to a problem involving
   Varopoulos' notion of volume and boundary of finite domains in dual graphs.\\

\subsection{Definitions}
Since we will use barycentric subdivisions to obtain the dual graph, we will start this section with the following definitions.
(These definitions and notations have been taken from\cite{Bri}.)
\begin{definition} (\emph{Barycentric Subdivision of a convex
polyhedral cell}) Let $C$ be a polyhedral cell in an
$n$-dimensional polyhedral complex $K$. The barycentric
subdivision of $C$ denoted by $C'$
is the simplicial complex defined as follows:\\
There is one geodesic simplex in $C'$ corresponding to each
strictly ascending sequence of faces $F_0 \subset F_1
\subset....\subset F_n$ of $C$; the simplex is the convex hull of
barycenters of $F_i$. Note that the intersection in $C$ of two
such simplices is again such a simplex. The natural map from the
disjoint union of these geodesic simplices to $C$ imposes on $C$
the structure of a simplicial complex - this is $C'$.
\end{definition}

\begin{definition} (\emph{Barycentric Subdivision of a polyhedral
$n$-complex $K$}) Let $p: \coprod_\lambda C_\lambda \rightarrow K$
(where $C_\lambda$ are the polyhedral cells of $K$), be a
projection. For each cell $C_\lambda$ we index the simplices of
the barycentric subdivision $C'_{\lambda}$ by a set $I_\lambda$;
so $C'_{\lambda}$ is the simplicial complex associated to
$\coprod_{I_\lambda}S_i \rightarrow C_\lambda$ where $S_i$ denotes
the simplices of $C'_\lambda$. Let $\lambda'= \coprod_\lambda
I_\lambda$. By composing the natural maps $\coprod_{I_\lambda}S_i
\rightarrow C_\lambda $ and $p: \coprod_\lambda C_\lambda
\rightarrow K$ we get a projection $p': \coprod_{i\in \lambda'}S_i
\rightarrow K$. Let $K'$ be the quotient of $\coprod_{i\in
\lambda'}S_i$ by the equivalence relation $[x\sim y$ iff $p'(x)=
p'(y)]$. $K'$ is the barycentric subdivision of $K$.
\end{definition}

\textbf{Note} Given any complex, there is a poset $\mathcal{P}$ on
the cells of the complex ordered by inclusion. Therefore for any
ascending chain in $\mathcal{P}$ there is a simplex in the
barycentric subdivision of the complex.

\subsection{Dual Graphs}\label{dual}

\paragraph{}
The examples in the previous section gives us an idea of the cell
decomposition of the spaces under consideration.
The groups considered here are all finitely generated,
so the groups act properly and cocompactly by isometries on their respective universal covers. In fact,
the translates of the fundamental domain covers the universal cover $\widetilde{X}$ in each case.

It is essential to mention here that the only groups we are interested in are the 3-dimensional groups $\mathcal{H}$ and $\mathcal{S}$ from Section \ref{decomposition} and we will use the letter $G$ to refer to them in general.

Next, we define the dual graph $\Gamma$ using Definition \ref{graph}. The vertex set of $\Gamma$, $V_\Gamma=\{ \sigma: \sigma$ is
 a 3-cell of $\widetilde{X} \}$ while the edge set is, $E_\Gamma =\{ \tau: \tau$ is a codimension one face (2-cells) shared by two adjacent 3-cells of $K \}$.

\begin{lemma}
There is a map that embeds the graph $\Gamma$ in $K$.
\end{lemma}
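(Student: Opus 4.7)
The plan is to construct the embedding using barycenters, exactly the kind of construction that motivates the barycentric subdivision definitions recalled in Section~6.1. Concretely, for each 3-cell $\sigma$ of $\widetilde{X}$ choose the barycenter $b_\sigma$ as the image of the corresponding vertex of $\Gamma$; for each 2-cell $\tau$ shared by two adjacent 3-cells $\sigma_i,\sigma_j$ choose the barycenter $b_\tau$ of $\tau$ as an interior waypoint, and send the edge of $\Gamma$ joining $\sigma_i$ and $\sigma_j$ to the concatenation of the two straight segments $[b_{\sigma_i},b_\tau]$ and $[b_\tau,b_{\sigma_j}]$, each taken inside the corresponding 3-cell. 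Since each 3-cell in our cell decomposition is a convex polyhedron (a fundamental domain of the form described in Examples \ref{lattice1} and \ref{lattice2}), these straight segments make sense and lie inside their respective closed 3-cells.

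To see that the resulting map $\iota\colon\Gamma\to K$ is an embedding, I would verify the following in order. First, $\iota$ is well-defined on vertices because distinct 3-cells of $\widetilde{X}$ have distinct barycenters. Second, $\iota$ is well-defined on edges because each edge of $\Gamma$ corresponds to a unique shared 2-face $\tau$, so the waypoint $b_\tau$ is unambiguous, and each of the two segments lies in the interior of its closed 3-cell except at the single boundary point $b_\tau$. Third, I would check injectivity on open edges: two open edges of $\Gamma$ map to disjoint subsets of $K$ because each segment $[b_\sigma,b_\tau]$ is contained in the simplex of the barycentric subdivision of $\sigma$ corresponding to the ascending chain $\tau\subset\sigma$, and distinct chains produce simplices that meet only along their shared faces of barycentric subdivision. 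Thus two distinct edges of $\Gamma$ share at most an endpoint $b_\sigma$ in $K$, which happens exactly when the corresponding edges share that vertex in $\Gamma$.

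Finally, to promote injectivity into a topological embedding I would note that $\Gamma$, with the CW topology, is a locally finite graph (each 3-cell has finitely many 2-faces under the cocompact action of $G$), and $\iota$ restricted to the closed star of any vertex is a homeomorphism onto its image because a finite union of the closed segments $[b_\sigma,b_\tau]$ inside the closed 3-cell $\overline{\sigma}$ forms a finite simplicial subcomplex. Therefore $\iota$ is a proper, continuous injection from a locally finite graph into the Hausdorff space $K$, hence a topological embedding.

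The main obstacle to watch out for is the disjointness of edge interiors, and it is resolved precisely by the observation that the images of the edges all lie in the 1-skeleton of the barycentric subdivision of $\widetilde{X}$ restricted to chains $\tau\subset\sigma$ with $\dim\tau=2$ and $\dim\sigma=3$; any potential crossing inside a 3-cell would force two distinct 2-faces of that 3-cell to share a barycenter, which is impossible. All the other steps are essentially bookkeeping on top of this observation.
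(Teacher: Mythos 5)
Your proposal is correct and takes essentially the same approach as the paper: both send each vertex of $\Gamma$ to the barycenter of its $3$-cell and each edge to the two $1$-simplices of the barycentric subdivision joining $b_\sigma$ to $b_\tau$ for the ascending chain $\tau\subset\sigma$, which in a convex polyhedral $3$-cell are precisely your straight segments. You simply flesh out the injectivity and local-homeomorphism checks that the paper compresses into the remark that a complex and its barycentric subdivision are naturally homeomorphic.
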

\begin{proof}
Consider the barycentric subdivision of both the graph $\Gamma$ and the universal cover $\widetilde{X}$, we denote these
barycentric subdivisions by $\Gamma'$ and $\widetilde{X'}$ respectively. Next we map the vertices in $V_\Gamma$ to the barycenters of the 3-cells
while we map the barycenter of an edge $\tau$, labeled by $\tau_m$ in $E_\Gamma$ to the barycenter of the codimension one face shared by the two 3-cells in $V_\Gamma$, serving as the initial and terminal vertices of $\tau$. Finally, if $\tau$ is an edge with initial and terminal vertices $\sigma_1$ and $\sigma_2$ respectively, then, the left half-edge of $\tau$ is mapped to
the simplex in $K'$ corresponding to the ascending chain $\tau \subset \sigma_1$ in the poset $\mathcal{P}$ while, the right half-edge maps to the simplex in $K'$ corresponding to the ascending chain $\tau\subset \sigma_2$ in $\mathcal{P}$.

 As there is a natural bijection between the barycentric subdivision of a space and the geometric realization of the space itself
so, there is a map that embeds $\Gamma$ in $K$.
\end{proof}

So, now we have a dual graph in $\widetilde{X}$ which is also a Cayley graph (with the same name $\Gamma$), with respect to a finite generating set which we will define subsequently. The aim of the remaining part of this section is to show that $\Gamma$ is quasi-isometric to $\widetilde{X}$ using the following lemma.

\begin{lemma}($\check{S}varc-Milnor$ Lemma,\cite{Bri})
Given a length space $X$. If a group $G$ acts properly and cocompactly by isometries on $X$, then $G$ is finitely generated and for any choice of
basepoint $x_0 \in X$, the map $f: G \rightarrow X$, defined by $g \mapsto g.x_0$ is a quasi-isometry.
\end{lemma}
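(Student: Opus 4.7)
The plan is to prove the lemma along the standard lines: extract a finite candidate generating set $S$ for $G$ from the cocompact, proper action, use the length-space hypothesis to express every group element as a short word in $S$, and then verify the two linear inequalities that together define a quasi-isometry. First I would use cocompactness to choose $R>0$ so large that $G\cdot \bar B(x_0,R)=X$, which follows from any compact fundamental set being contained in a sufficiently large ball. This step already gives the coarse surjectivity required of $f$, since every $y\in X$ lies within distance $R$ of some $g\cdot x_0$. Next I would define
\[
S = \{\, g\in G\setminus\{1\} : d(x_0, g\cdot x_0)\leq 3R\,\}
\]
and invoke properness of the action to conclude $S$ is finite; this $S$ is the intended generating set.

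The heart of the argument is showing that $S$ generates $G$ with word length linearly controlled by displacement. Given $g\in G$ with $L=d(x_0,g\cdot x_0)$, the length-space hypothesis lets me pick points $x_0=p_0,p_1,\dots,p_n=g\cdot x_0$ along an almost-minimizing path with consecutive distances at most $R$ and $n$ bounded linearly in $L$. For each $p_i$ I choose $g_i\in G$ with $d(p_i,g_i\cdot x_0)\leq R$, taking $g_0=1$ and $g_n=g$. Setting $s_i=g_{i-1}^{-1}g_i$, the isometric action combined with the triangle inequality gives $d(x_0,s_i\cdot x_0)\leq 3R$, so $s_i\in S\cup\{1\}$, and $g=s_1s_2\cdots s_n$ proves $S$ generates with $|g|_S$ linearly bounded by $d(x_0,g\cdot x_0)$. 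The reverse inequality is routine: with $D:=\max_{s\in S} d(x_0,s\cdot x_0)\leq 3R$, any factorization $g=s_1\cdots s_k$ with $s_i\in S$ yields $d(x_0,g\cdot x_0)\leq Dk$ by the triangle inequality, hence $d(x_0,g\cdot x_0)\leq D\cdot |g|_S$. These two linear bounds, combined with the coarse surjectivity already noted, are exactly the quasi-isometry condition for $f$.

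The main obstacle is the chain-along-a-path step, which must be justified using only that $X$ is a length space rather than assuming geodesics exist. In a length space, any two points at distance $L$ can be joined by a path of length at most $L+\varepsilon$ for every $\varepsilon>0$, and one subdivides such a path uniformly to produce the $p_i$. Replacing each $p_i$ by a nearby $G$-translate of $x_0$ then forces the consecutive differences $s_i$ to displace $x_0$ by at most $3R$, which is precisely the condition cutting out the explicit finite set $S$. Every remaining step is triangle-inequality bookkeeping under the isometric action, so no further subtleties appear.
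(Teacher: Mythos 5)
The paper does not prove this lemma at all; it is quoted from \cite{Bri} and invoked as a known result immediately afterward in Proposition \ref{cay}, so there is no in-paper proof to compare against. Your argument is the standard \v{S}varc--Milnor proof --- choose $R$ with $G\cdot\bar B(x_0,R)=X$ by cocompactness, set $S=\{g\neq 1: d(x_0,g\cdot x_0)\leq 3R\}$, use properness for finiteness of $S$, chain along an almost-minimizing path to get the upper bound on word length, and use the triangle inequality under the isometric action for the lower bound --- and the bookkeeping you sketch (in particular $d(x_0,s_i\cdot x_0)=d(g_{i-1}\cdot x_0,g_i\cdot x_0)\leq 3R$ and the telescoping product $g=s_1\cdots s_n$, together with the $\varepsilon$-almost-minimizing path substituting for a geodesic) is correct. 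The only implicit point worth making explicit is that the set $S$ is finite because any $g$ with $d(x_0,g\cdot x_0)\leq 3R$ satisfies $g\bar B(x_0,3R)\cap\bar B(x_0,3R)\neq\emptyset$, so one needs the closed ball to be compact (i.e.\ $X$ proper), a hypothesis that is standard in the statement of this lemma and tacitly assumed both by you and by the paper's formulation.
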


  Let $C$ be the fundamental domain of $\widetilde{X}$ ( a compact subset of $\widetilde{X}$ such that its translates covers all of $\widetilde{X}$). We then define the generating set of the group $G$ in the following way, $\mathcal{A}= \{ g\in G \hspace{0.2cm}\mid \hspace{0.2cm} gC \cap C =codimension-one \hspace{0.1cm} face \}$.

  In case of $\mathcal{H}$ the valence of a
 vertex  is eight, while in the case of $\mathcal{S}$ the valence is twelve. Hence, the generating sets in these cases will contain
four and six elements respectively. We will define the generating sets in detail for specific examples i.e, for the groups $\mathcal{H}$ and $\mathcal{S}$ in the following lemma.\\

\textbf{Note}:
In the following lemma, we shall denote the triangular faces of the cell decomposition obtained in the previous
 section as $\triangle XYZ$, where $X,Y,Z$ are the labels of vertices in the cell decomposition forming a triangle.

\begin{lemma} \label{generator} Given the cell decompositions for groups $\mathcal{H}$ and $\mathcal{S}$ in section \ref{decomposition}:
\begin{enumerate}
\item $\mathcal{A}_0 = \{ b,c,t, tb\} $ is a finite generating set for  $\mathcal{H}$, where $c=b^{-1}a$ (from Figure \ref{nil}).
\item $\mathcal{A}_0 = \{ d,t,c_1c_2,td^{-1},tc_2^{-1}c_1^{-1},tb_1c_1^{-1}\} $ is a finite generating set for  $\mathcal{S}$, where,
$a_1a_2=a, d=ba=ab, c_1= ab_1, c_2=b_2a $ (from Figure \ref{sol}).
\end{enumerate}
\end{lemma}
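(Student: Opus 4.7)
The plan is to use the defining condition $\mathcal{A}=\{g\in G : gC\cap C \text{ is a codimension-one face}\}$ together with the cell decompositions of Section~\ref{decomposition} (Figures~\ref{nil} and~\ref{sol}) to enumerate the generators explicitly, and then to verify algebraically that the proposed sets $\mathcal{A}_0$ generate the groups. The $\check{S}$varc--Milnor Lemma tells us that such a set is automatically a generating set, so the content is really in (a) reading the correct group elements off the face-pairings, and (b) matching those elements with the lists in the statement.

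First I would identify each codimension-one face of $C$ with the element carrying $C$ to the adjacent $3$-cell sharing that face. Because $\mathcal{H}$ and $\mathcal{S}$ act freely on the tessellation of $\widetilde{X}$, each face corresponds to a unique element of $\mathcal{A}$, and opposite faces pair under $g\leftrightarrow g^{-1}$. The valences quoted just before the lemma (eight for $\mathcal{H}$, twelve for $\mathcal{S}$) give $|\mathcal{A}|=8$ and $12$, so $\mathcal{A}_0$ consists of one representative per inverse pair, i.e.\ four and six elements, matching the lemma. For Nil the identifications read straight from Figure~\ref{nil}: the top and bottom tori are glued by $\psi_A$ contributing $t$; the two $b$-$t$ side faces contribute $b$; the triangular faces meeting along the diagonal $c=b^{-1}a$ contribute $c$; and the remaining pair of triangular faces, whose identification combines a $t$-translation with a $b$-shift (because $\psi_A$ tilts the diagonal onto $ab$), contributes $tb$. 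For Sol the analogous reading of Figure~\ref{sol} yields the six listed elements after tracking the subdivision labels $a_1,a_2,b_1,b_2,c_1,c_2$ and the diagonal $d$.

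Second, to check that each $\mathcal{A}_0$ really generates the group, I express the standard presentation generators $a,b,t$ inside $\langle\mathcal{A}_0\rangle$. For $\mathcal{H}$ this is immediate: $b,t\in\mathcal{A}_0$ and $a=bc\in\langle\mathcal{A}_0\rangle$, so $\mathcal{H}=\langle a,b,t\rangle\subseteq\langle\mathcal{A}_0\rangle$. For $\mathcal{S}$, $t\in\mathcal{A}_0$ directly; using $d=ab$, $c_1=ab_1$, $c_2=b_2a$, $b_1b_2=b$, and $[a,b]=1$, I compute $c_1c_2=a(b_1b_2)a=aba=a^2b$, hence $c_1c_2\,d^{-1}=a^2b(ab)^{-1}=a$, and then $b=a^{-1}d$, so both $a$ and $b$ lie in $\langle\mathcal{A}_0\rangle$, giving $\mathcal{S}=\langle\mathcal{A}_0\rangle$.

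The main obstacle is the bookkeeping for Sol: because $\psi_B$ has larger entries, each horizontal $2$-cell is cut into four triangles with auxiliary edges, and one must verify that each individual triangle on top is identified with a particular triangle on the bottom by exactly one of the six listed elements. In particular, one has to confirm that the three vertical identifications yield precisely the words $td^{-1}$, $tc_2^{-1}c_1^{-1}$, and $tb_1c_1^{-1}$ rather than some other product involving $t$ and the subdivision edges. This requires a careful case-by-case reading of Figure~\ref{sol} against the attaching map $\psi_B$, and is where all the genuine combinatorial work of the proof lies.
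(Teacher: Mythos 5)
Your proposal reads the face-pairing isometries off the cell decompositions exactly as the paper does (vertex $A$ as base point, one 3-cell for each codimension-one face of the fundamental domain, one representative per inverse pair), so the geometric core of your argument is the same. What you add --- and the paper does not do --- is an independent algebraic verification that the listed words actually generate: for $\mathcal{H}$ the observation $a=bc$, and for $\mathcal{S}$ the computation $c_1c_2=a^2b$, $c_1c_2\,d^{-1}=a$, $b=a^{-1}d$. The paper's own proof stops after enumerating the face-pairings, implicitly relying on the Svarc--Milnor machinery to conclude generation; your algebraic check is a useful sanity test and makes the lemma self-contained. Two small cautions: first, your Sol computation uses $b_1 b_2 = b$, which is read off Figure~\ref{sol} rather than stated in the lemma (the lemma only records $a_1a_2=a$), so you should state that assumption explicitly; second, you correctly identify but do not actually carry out the case-by-case matching of the four Sol top triangles with the words $t$, $td^{-1}$, $tc_2^{-1}c_1^{-1}$, $tb_1c_1^{-1}$ --- the paper does spell that out (faces $\triangle BFM_1$, $\triangle BGM_1$, $\triangle BM_2F$, $\triangle M_2CF$), and a complete proof would need that step filled in.
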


\begin{proof} (\emph{of (1)})
 We consider Figure \ref{nil} for this part of the proof. The vertex $A$ is chosen as the base point  of universal cover $\widetilde{X}$.
 The paths that take the base point to its images in copies
of the fundamental domain (which are 3-cells sharing codimension one faces with the fundamental domain) represent the isometries
that take the domain to its copies and hence they are the generators of the group with respect to the Cayley graph $\Gamma$. In case of $\mathcal{H}$, there are eight other 3-cells sharing codimension one faces with the fundamental domain or in other words, due to the cell decomposition
 shown in section \ref{decomposition}, any 3-cell in the universal cover shares a codimension one face with eight other 3-cells.

 In the following lines we give a list of isometries and hence the words which generate translates of the fundamental domain that share a codimension one face with the domain.

The path from $A$ to $D$ represents the isometry $b$ taking the domain to the 3-cell to its right; path from $A$ to $H$ represents the word $c$ takes the domain to the cell behind itself; $A$ to $B$, the word $t$ takes the domain to the 3-cell on the face $\triangle BCG$; path from $A$ to $G$, the word $tb$ takes the domain to the 3-cell on the face $\triangle GFC$. The isometries that take the domain to the rest of the neighboring 3-cells, are inverses of the words already mentioned above. For example the isometry taking the domain to the 3-cell sharing the face $\triangle ADE$ is $t^{-1}$, while the one taking it to the 3-cell associated with the face $\triangle AHE$ is $b^{-1}t^{-1}$ etc. So it is clear that $\mathcal{A}_0 = \{ b,c,t, tb\} $  is a finite generating set for $\mathcal{H}$ and ${\mathcal{A}_0}^{-1} = \{b^{-1},c^{-1},t^{-1},b^{-1}t^{-1} \}  $.

(\emph{Proof. of (2)}) This can be shown in a similar way as above. In this case, the fundamental domain shares codimension one faces
with twelve other 3-cells, (four cells each above and below, two on each side and the remaining two at the front and back).
 As before the translates $d$ and $t$ generate copies to the right and
vertically above (and sharing the face $\triangle BFM_1$)  the fundamental domain respectively. The translate $td^{-1}$ generates the copy sharing the face $\triangle BGM_1$, while $tc_2^{-1}c_1^{-1}$ generates the copy of the fundamental domain along the face $\triangle BM_2F$. Finally $tb_1c_1^{-1}$ is responsible for the copy of the domain sharing the face $\triangle M_2CF$ with the fundamental domain. So $\mathcal{A}_0 = \{ d,t,c_1c_2,td^{-1},tc_2^{-1}c_1^{-1},tb_1c_1^{-1}\} $. Also, it is easy to check that ${\mathcal{A}_0}^{-1} = \{d^{-1}, t^{-1}, dt^{-1},c_1c_2t^{-1},c_1b_1^{-1}t^{-1}\}$.
\end{proof}

\begin{proposition} \label{cay}
Cay($G, \mathcal{A}_0$), the Cayley graph of the group $G$ with respect to the generating sets $\mathcal{A}_0$ defined in Lemma \ref{generator} is quasi-isometric to $\widetilde{X}$ .
\end{proposition}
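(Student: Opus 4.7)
The plan is to apply the Švarc–Milnor Lemma stated just above, since all its hypotheses are essentially present in the setup. First I would endow $\widetilde{X}$ with the natural path metric coming from its cell decomposition (declaring each closed 3-cell isometric to the Euclidean fundamental domain described in Section \ref{decomposition}); this makes $\widetilde{X}$ a complete geodesic, hence length, space. Next I would verify that $G$ acts on $\widetilde{X}$ by deck transformations, and that these deck transformations are isometries of the path metric (this is automatic from the construction, since the transformations permute the 3-cells in a way compatible with the identifications used to build the metric).

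The next step is to check that the action is proper and cocompact. Cocompactness follows immediately from the fact that $C$, the fundamental domain described in Lemma \ref{generator}, is a compact subset of $\widetilde{X}$ whose $G$-translates cover $\widetilde{X}$. Properness follows from the discreteness of $G$ acting freely on the universal cover of a compact CW complex whose fundamental group is $G$. With these hypotheses verified, the Švarc–Milnor Lemma immediately yields that the orbit map $f\co G \to \widetilde{X}$, $g \mapsto g \cdot x_0$, is a quasi-isometry, where $G$ is endowed with the word metric coming from \emph{any} finite generating set.

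To finish, I would choose the basepoint $x_0$ to be the barycenter of the fundamental 3-cell $C$, and then observe that the vertex set $V_\Gamma$ of the dual graph $\Gamma$ is precisely the $G$-orbit of $x_0$ (since vertices of $\Gamma$ correspond to 3-cells of $\widetilde{X}$, and these are the $G$-translates of $C$). By construction, two vertices of $\Gamma$ are joined by an edge exactly when the corresponding 3-cells share a codimension-one face, i.e.\ exactly when the corresponding group elements differ by multiplication by an element of $\mathcal{A}_0$ as defined in Lemma \ref{generator}. Hence $\Gamma$ is naturally identified with $\mathrm{Cay}(G,\mathcal{A}_0)$, and the inclusion of $G$ (with word metric from $\mathcal{A}_0$) into $\mathrm{Cay}(G,\mathcal{A}_0)$ is a $(1,1)$-quasi-isometry. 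Composing with the Švarc–Milnor quasi-isometry gives the desired quasi-isometry $\mathrm{Cay}(G,\mathcal{A}_0) \to \widetilde{X}$.

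The only mild obstacle is making certain that the set $\mathcal{A}_0$ supplied by Lemma \ref{generator} is exactly the generating set that emerges from the Švarc–Milnor construction applied with $C$ as the fundamental domain; but this is precisely what Lemma \ref{generator} was set up to verify, by enumerating the codimension-one neighbors of $C$ and reading off the group element carrying $C$ to each neighbor. Once this correspondence is in place, the proposition is a direct consequence of the Švarc–Milnor Lemma with no further analysis required.
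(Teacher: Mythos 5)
Your proof is correct and follows essentially the same route as the paper: invoke the \v{S}varc--Milnor Lemma to conclude that $G$ is quasi-isometric to $\widetilde{X}$, then pass between $G$ with its word metric and $\mathrm{Cay}(G,\mathcal{A}_0)$. You are more explicit than the paper in verifying the hypotheses of \v{S}varc--Milnor and in matching $\mathcal{A}_0$ to the generating set that the \v{S}varc--Milnor construction produces from the fundamental domain $C$, whereas the paper simply notes that Cayley graphs with respect to any two finite generating sets of the same group are quasi-isometric.
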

\begin{proof}
Milnor's Lemma says that the group $G$ is finitely generated and quasi-isometric to the ambient space $\widetilde{X}$. But the
Cayley graph Cay$(G,\mathcal{A})$ with respect to any finite generating set  $\mathcal{A}$ of the group $G$, is quasi-isometric to the group itself, this quasi-isometry can be seen as the natural inclusion $G \hookrightarrow Cay(G,\mathcal{A})$, defined by $g \mapsto g.1$ for all $g \in G$. This last quasi-isometry is also a simple illustration of Milnor's Lemma.

Finally, two Cayley graphs associated to the same group but with different generating sets are quasi-isometric, this implies Cay$(G, \mathcal{A}_0)$ is quasi-isometric to $\widetilde{X}$.
\end{proof}

\subsection{Definitions and Notations} \label{var}

\paragraph{}
\vspace{0.3cm}
 We start with the definition of a dual graph (Section \ref{dual}).

\begin{definition} \label{graph}
Given an ambient $n$-dimensional space $K$,
 we define a graph $\Gamma$ with
vertex set  $V_\Gamma
\hspace{0.2cm}=\hspace{0.2cm}\{\sigma: \hspace{0.2cm}\sigma$ is a
$n$-cell of $K\}$ and edge set  $E_\Gamma\hspace{0.2cm}=\hspace{0.2cm}\{\tau: \hspace{0.2cm}\tau$ is a
$(n-1)$-cell and $\tau$ is a face of exactly 2 $n$-cells
of $K\}$.
\end{definition}

Given a finitely presented group $G$, let $X$ be the corresponding $n$-dimensional cell-complex and let $\widetilde{X}$ be its universal cover. Let  $f: (D^n, S^{n-1})\rightarrow \widetilde{X}$ be a reduced diagram (defined in Section \ref{transverse1}) where $D^n$ and its boundary sphere $S^{n-1}$ are either embedded or immersed in $\widetilde{X}$. Note that the map $f$ considered here is transverse and hence admissible, so each $i$-handle in the diagram maps to an $(n-i)$-cell in $\widetilde{X}$. Next, we consider a finite subset $D$ of the vertex $V_\Gamma$ such that,
 $D=\{\sigma: \sigma $ is an $n$-cell in $\widetilde{X}$ such that $\sigma \in Im(f)\}$. Associated with $D$ is a function analogous to a characteristic map, given by, $\phi_D: V_\Gamma \rightarrow \mathbb{N}\cup \{0\}$ defined
by, $\phi_D(\sigma)=$ number of pre-images of $\sigma$ under $f$.

\begin{remark}\label{phi1}
Let $||\phi_D||= \displaystyle\sum_{\sigma\in D}\phi_D(\sigma)$, this is the number of 0-handles in the diagram i.e, $||\phi_D||= Vol^n(D^n)$ where $Vol^n(D^n)$ denotes the volume of the $n$-ball $D^n$.
\end{remark}
\begin{remark}
It is clear that if $f$ is an embedding in the above definition then $\phi_D$ is in fact the characteristic function of the set $D$.
\end{remark}

\begin{definition} \label{vboundary} The Varopoulos boundary of $D$ is
defined to be the set of all $(n-1)$-cells $\tau \in E_\Gamma$
such that $\tau$ is a face of exactly two $n$-cells
$\sigma_i,\sigma_j \in V_\Gamma$ such that $\phi_D(\sigma_i)\neq \phi_D(\sigma_j)$.\\

\textbf{Notation}: The Varopoulos boundary will be denoted by, $\partial_V D$.
\end{definition}

Next we define $\nabla \phi_D:E_\Gamma \rightarrow \N \cup \{0\}$
  by, $\nabla \phi_D(\tau)= |\phi_D(t(\tau))-\phi_D(i(\tau))
  |$, where $i$ and $t$ have the same definition as before.\\
The cardinality of the Varopoulos boundary $|\partial_V D|$, in this
case can be given by, \\
$\hspace{2.5cm}\parallel \nabla \phi_D \parallel=
\displaystyle\sum_{\tau \in\partial_V D }| \phi_D(t(\tau))-\phi_D(i(\tau))
|$. Note that this definition says that $\tau \in E_\Gamma$ is a boundary edge of $D$
if $\phi_D(t(\tau)) \neq \phi_D(i(\tau))$.

\subsection{Reducing to Varopoulos Isoperimetric Inequality }

\paragraph{}
In this section we show that our problem to obtain an upper bound for the second order Dehn functions can be reduced to finding an inequality between volume and boundary notions according to Varopoulos in case of $\mathcal{H}$ and $\mathcal{S}$. We start with the following lemma which works in general for dimensions 1 or more.

\begin{lemma}  \label{bound}
$\parallel \nabla \phi_D \parallel \leq | \partial
D^n |$, where $|\partial D^n |$ is the
area or volume of the boundary sphere of the diagram $(D^n,S^{n-1})$ for $n>1$.
\end{lemma}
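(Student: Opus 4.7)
My plan is to establish the estimate edge by edge in the dual graph $\Gamma$. For each $\tau\in E_\Gamma$ with incident $n$-cells $\sigma_i=i(\tau)$ and $\sigma_j=t(\tau)$, I would show
\[
|\phi_D(\sigma_i)-\phi_D(\sigma_j)|\leq b(\tau),
\]
where $b(\tau)$ denotes the number of $0$-handles of the induced boundary decomposition $N$ of $S^{n-1}$ that map onto $\tau$. Since the admissible volume $|\partial D^n|$ is precisely the total count of $0$-handles of $N$, we have $|\partial D^n|=\sum_\tau b(\tau)$, so summing the local estimate over all $\tau\in E_\Gamma$ delivers the lemma.

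To handle a single $\tau$, I would pick a generic interior point $p\in\tau$ and analyse the pre-image $f^{-1}(p)\subset D^n$. By admissibility and transversality, the only handles that can meet $f^{-1}(p)$ are: $0$-handles of $M$ mapping to $\sigma_i$ or $\sigma_j$, $1$-handles of $M$ mapping to $\tau$, and $0$-handles of $N$ mapping to $\tau$; higher-index handles map to cells of dimension at most $n-2$ and so miss $p$. Each contributing $1$-handle $\Sigma^1\times D^{n-1}$ meets $f^{-1}(p)$ in an arc $\Sigma^1\times\{*\}$ whose two endpoints are identified with the single pre-images of $p$ in the handles attached at its ends. Thus $f^{-1}(p)$ is a disjoint union of arcs whose endpoints are either interior (at $0$-handles of $M$, one per such handle mapping to $\sigma_i$ or $\sigma_j$) or on $S^{n-1}$ (at $0$-handles of $N$ mapping to $\tau$). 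Classifying arcs by the types of their two endpoints into counts $N_{ii},N_{jj},N_{ij},N_{iB},N_{jB},N_{BB}$ and tallying incidences gives
\begin{align*}
\phi_D(\sigma_i) &= 2N_{ii}+N_{ij}+N_{iB},\\
\phi_D(\sigma_j) &= 2N_{jj}+N_{ij}+N_{jB},\\
b(\tau) &= N_{iB}+N_{jB}+2N_{BB}.
\end{align*}

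The decisive step is to use the reduced hypothesis to eliminate the $ii$ and $jj$ types. For a type-$ii$ arc, the joining $1$-handle has two ends carrying opposite induced boundary orientations as pieces of $\partial(\Sigma^1\times D^{n-1})$, so $f$-compatibility of the attaching identifications forces the $\tau$-faces of the two attached $0$-handles $H^0_1,H^0_2$ to induce opposite orientations on $\tau$. Since both $0$-handles cover the same $n$-cell $\sigma_i$, each $\tau$-face orientation equals the fixed $\sigma_i$-boundary orientation on $\tau$ multiplied by the sign of that $0$-handle's map to $\sigma_i$; for the two signs to disagree is exactly the condition that $f|_{H^0_1}^{-1}\circ f|_{H^0_2}$ is orientation reversing, i.e.\ the cancelling pair forbidden by the definition of a reduced diagram. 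Hence $N_{ii}=0$, and symmetrically $N_{jj}=0$. Substituting into the counts gives
\[
|\phi_D(\sigma_i)-\phi_D(\sigma_j)|=|N_{iB}-N_{jB}|\leq N_{iB}+N_{jB}\leq b(\tau),
\]
and summing over $\tau\in E_\Gamma$ yields $\|\nabla\phi_D\|\leq|\partial D^n|$.

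The hardest part will be the orientation analysis needed to kill $N_{ii}$ and $N_{jj}$: one has to trace induced boundary orientations carefully through the product structure $\Sigma^1\times D^{n-1}$ of a $1$-handle, through the attaching identifications onto the $\tau$-faces of the adjacent $0$-handles, and then compare against the fixed $\sigma_i$-boundary orientation on $\tau$. Everything else --- the endpoint classification, the three counting identities, and the final pointwise inequality --- is routine bookkeeping once this orientation fact is in hand.
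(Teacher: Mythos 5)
Your argument is correct and is at bottom the same count the paper performs: both proofs fix a $\tau\in E_\Gamma$, use the $1$-handles over $\tau$ to pair up the $0$-handles over $\sigma_i$ with those over $\sigma_j$, and attribute the surplus to $0$-handles of the boundary decomposition over $\tau$. What you do more carefully than the paper is isolate exactly where the reduced hypothesis is used. The paper's proof simply asserts that an interior pre-image of $\sigma_2$ ``is accompanied with a pre-image of $\sigma_1$'' across a $1$-handle, tacitly excluding $\sigma_1$--$\sigma_1$ and $\sigma_2$--$\sigma_2$ adjacencies; your classification of arc components of $f^{-1}(p)$ by endpoint type makes this a clean bookkeeping identity and pins the exclusion of $N_{ii}$ and $N_{jj}$ on the orientation argument. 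That orientation step, which you flag as the delicate part, is in fact correct: the two attaching regions $\{\pm 1\}\times D^{n-1}$ of the $1$-handle carry opposite boundary co-orientations in $D^n$ but are identified with $\tau$ via the \emph{same} characteristic map $\phi_\tau$, so the two $0$-handles at the ends must map to $\sigma_i$ with opposite degree, which is precisely the orientation-reversing condition of Definition~\ref{reduced}. So while your route is formally the same, it is noticeably tighter than the paper's, and if anything repairs a gap in the published proof rather than deviating from it.
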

\begin{proof}
Let us consider the n-dimensional reduced diagram $g: (D^n,S^{n-1}) \rightarrow \widetilde{X}$ (Definition \ref{reduced}).
Let $\tau \in \widetilde{X}$ be the $(n-1)$-cell such that $i(\tau)= \sigma_1$ and $t(\tau)= \sigma_2$,  for $\sigma_1,\sigma_2 \in D$.
In terms of poset $\mathcal{P}$, $\tau \subset \sigma_1$
and $\tau \subset \sigma_2$ where $\sigma_1,\sigma_2$ are $n$-cells
in $\widetilde{X}$ such that $\sigma_1,\sigma_2 \in D (\subset
V_\Gamma)$ and $\phi_D(\sigma_1)\neq \phi_D(\sigma_2)$.

\begin{figure}[ht!]
\labellist
\small\hair 2pt

\pinlabel {$\sigma_1^k$} at 85 116
\pinlabel {$\tau^k$} at 133 116
\pinlabel {$\sigma_2^l$} at 178 116
\pinlabel {$\tau^j$} at 314 107
\pinlabel {$\sigma_1^i$} at 130 53
\pinlabel {$\tau^i$} at 135 13
\pinlabel {$\sigma_2^j$} at 275 108
\pinlabel {$\tau^j$} at 314 107

 \endlabellist
\centering
\includegraphics[scale=0.85]{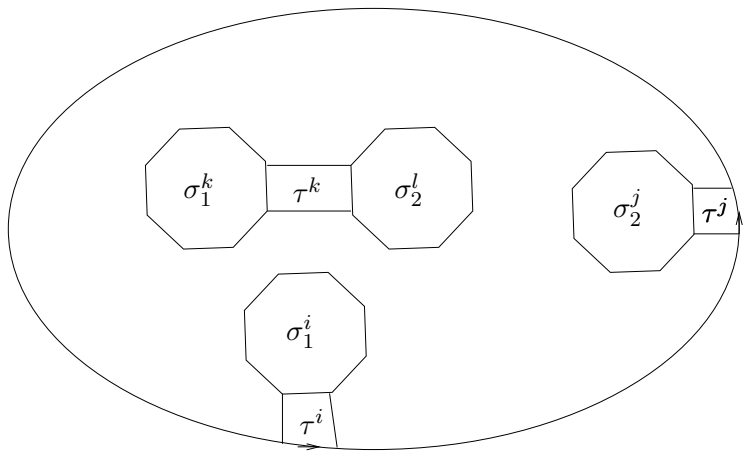}

\caption{2-dimensional example with pre-images $\sigma_1$ , $\sigma_2$ and 1-cell $\tau$ in $(D^2,S^1)$} \label{cells}
\end{figure}

By the definition of $\phi_D$, there are $\phi_D(\sigma_1)$
0-handles in $(D^n,S^{n-1})$ that map onto $\sigma_1$ via $g$ and
similarly there are $\phi_D(\sigma_2)$ 0-handles in $(D^n,S^{n-1})$ that
map onto $\sigma_2$ via $g$. Next, since we have
$\phi_D(\sigma_1)\neq \phi_D(\sigma_2)$, this implies
$\tau$ is one of the $(n-1)$-cells forming the boundary $(n-1)$-sphere, i.e, $\tau \in \partial_V D$ and as both $n$-cells have more than one pre-images, thus, $\tau$ too has one or more pre-images in $(D^n,S^{n-1})$ associated with
pre-images of both $\sigma_1$ and $\sigma_2$. The pre-images of $\sigma_1$ and $\sigma_2$ are either in the interior
of $(D^n,S^{n-1})$ with pre-images of $\tau$ or they are at the boundary with $\tau$ as a boundary $(n-1)$-cell in some instances.

\begin{figure}[ht!]
\labellist
\small\hair 2pt

\pinlabel {$\sigma_1^i$} at 50 140
\pinlabel {$\tau^i$} at 116 116
\pinlabel {$\sigma_2^i$} at 181 148
\pinlabel {$\sigma_1^j$} at 71 58
\pinlabel {$\tau^k$} at 150 41

\endlabellist
\centering
\includegraphics[scale=0.85]{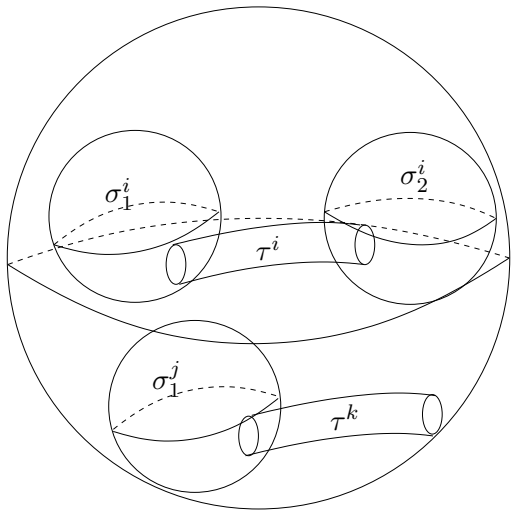}
\caption{3-dimensional example with pre-images for  cells $\sigma_1$ , $\sigma_2$ and 2-cell $\tau$ in $(D^3,S^2)$ } \label{cells1}
\end{figure}

If all the pre-images of $\sigma_1$ and $\sigma_2$ are in the interior of $(D^n,S^{n-1})$ with all pre-images of $\tau$ in the interior, then this implies $\phi_D(\sigma_1) = \phi_D(\sigma_2)$, which is against our assumption.
Without loss of generality let us assume that $\phi_D(\sigma_1) > \phi_D(\sigma_2)$.
In this case if at most $\phi_D(\sigma_2)$ of the pre-images are in the interior of $(D^n,S^{n-1})$,
then as we are considering handle decomposition of $n$-balls which are manifolds, the only way a pre-image
of $\sigma_2$ appears in the interior is if it is accompanied with a pre-image of $\sigma_1$ and they share a pre-image
of $\tau$ which is a 1-handle. Figures \ref{cells} and \ref{cells1} are illustrations of this in two and three dimensions respectively, where $\tau^i$ denotes a pre-image of $\tau$ while $\sigma_k^i$ etc.
denotes the pre-images of $\sigma_k$ for $k=1,2$. In these figures, one pre-image of $\tau$, a 1-handle, is in the interior of $(D^n,S^{n-1})$ between pre-images of $\sigma_1$ and $\sigma_2$, while the other is at the boundary adjoined to the 0-handle which is another pre-image of $\sigma_1$ . This implies that at least $(\phi_D(\sigma_1) - \phi_D(\sigma_2)) = (\phi_D(i(\tau) - \phi_D(t(\tau))$ of the pre-images of $\sigma_1$ are at the boundary of $(D^n,S^{n-1})$ with $\tau$ as a boundary $(n-1)$-cell.
Thus, $|\partial D^n | \geq \displaystyle \sum_{\tau\in \partial_V D} | \phi_D(i(\tau) - \phi_D(t(\tau)|  $
which implies, $\parallel \nabla \phi_D \parallel \leq |\partial D^n|$.
\end{proof}

\textbf{Note}: At this point, the problem involving the volume of the balls $Vol^n(D^n)$ and the area or volume of the boundary sphere $|\partial D^{n-1}|$, has reduced to one involving $||\phi_D||$ and $||\nabla\phi_D||$. In the next section we are going to use Varopoulos transport argument to prove the isoperimetric inequality involving $||\phi_D||$ and $||\nabla\phi_D||$. In case of the group $\mathcal{H}$ we will show that $||\phi_D||\leq const.||\nabla\phi_D||^{\frac{4}{3}} $ and in the case of $\mathcal{S}$, we will show that $||\phi_D||\preceq const.||\nabla\phi_D||\ln(||\nabla\phi_D||)$. These inequalities automatically provide upper bounds for the second order Dehn functions in both cases.

\section{Upper Bounds- Varopoulos Transport Argument}\label{isop}

\paragraph{}
In this section we are going to use Varopoulos transport to obtain isoperimetric inequalities in case of groups $\mathcal{H}$ and $\mathcal{S}$.
We are going to consider reduced diagrams, since in case they are unreduced
we can always use Proposition \ref{reduced} from Section \ref{transverse1} to obtain a reduced diagram.
 As before, we will denote the volume of an $n$-ball by $|D^n|$ and the volume of its boundary by $|\partial D^n|$, for any dimension $n$.

The Varopoulos isoperimetric inequality and Dehn functions have very little in common with each other. The only cases where they appear likely to agree are when the groups are fundamental groups of manifolds and also we are considering only top dimensional Dehn functions. So, in the cases we have here we can apply Varopoulos transport to obtain the isoperimetric inequality and hence the upper bounds of second order Dehn functions.

\subsection{Intuition behind the Varopoulos argument}\label{intui}

\paragraph{}
In this section, we present the intuition behind the notion of
transportation of mass from a finite-volume subset of a space. It is important to note here that all our examples are finitely presented
groups and the space under consideration will be the universal covers associated to the groups.

The following argument is originally due to
Varopoulos \cite{V}. It was used by Gromov in \cite{Gr} to demonstrate the transportation of mass (volume) in $\R^n$ and also that of a finite subset of group. This notion of transport was first described by Varopoulos in
\cite{V}, where he described transport in association with random walks. The same argument was further discussed by Gromov in \cite{Gr}.
Gromov also used this argument in his paper on Carnot-Carath\'{e}odory spaces \cite{G}. The lemma here is appropriately called ``Measure Moving lemma'' and helps in the proof of isoperimetric inequalities of hypersurfaces in Carnot-Carath\'{e}odory manifolds.
Before going into the technical details of the argument in Section
\ref{comp}, we will sketch the idea behind the argument
and the reason it works, in this section.

Given a graph $\Gamma$ let $D$
be a finite subset of the vertices of the graph transported by a path
$\gamma$, then the amount of mass transported through the
boundary of $D$ is obviously bounded above by $(|\gamma |
vol(\partial D))$. But we have to find a particular $\gamma$ to bound
$vol(D)$ by $vol(\partial D)$, for this we compute average
transport. Transport of $D$ corresponding to some $\gamma$
is defined as the mass of $D$ that is moved out of $D$ by the
action of $\gamma$. In other words it is the number of vertices in
the set $(D\gamma \setminus D)$,
where $D\gamma \hspace{0.1cm}=\hspace{0.1cm} \{v\gamma \mid v \in D\} $.

\begin{figure}[ht!]
\labellist
\small\hair 2pt

\pinlabel {$\gamma$} at 80 160
\pinlabel {$D$} at 48 74
\pinlabel {$D\gamma$} at 152 81
\endlabellist
\centering
\includegraphics[scale=0.75]{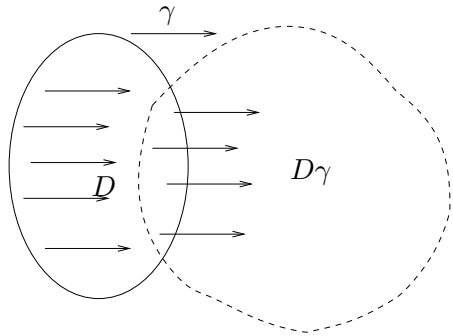}
\caption{Transport of $D$ } \label{D}
\end{figure}
Next for the lower bound for the transport we have to show that it is possible to move a percentage of the set $D$ off it.
It is always possible to choose the path $\gamma$ such that $l_\gamma$ is large enough that almost all of $D$ is transported off itself, but the key is to find a $\gamma$ in the graph such that it is small enough and moves at least half of $D$ off itself.
Since the shape of
$D$ maybe very unpredictable Figure \ref{avg}, therefore transport
via a path $\alpha$ maybe very small compared to the
mass of $D$ again for another path $\gamma$ the transport
maybe very large. In order to solve this problem we bound the length of the
path by considering a
ball of radius $R$ in $\Gamma$, denoted by $B(R)$
such that $|B(R)| \approx 2|D|$ and taking the
average transport over all $\gamma \in B(R)$. Once we
show that the average transport is at least half of $D$, we know
that there is at least one path $\gamma_0$ such that the
transport of $D$ via $\gamma_0$ is at least half of the mass of
$D$. This inequality in turn leads to the respective isoperimetric
inequalities of the groups we discuss in this context.

\begin{figure}[ht!]
\labellist
\small\hair 2pt

\pinlabel {$\gamma$} at 61 23
\pinlabel {$D$} at 48 74
\pinlabel {$\alpha$} at 109 160
\endlabellist
\centering
\includegraphics[scale=0.75]{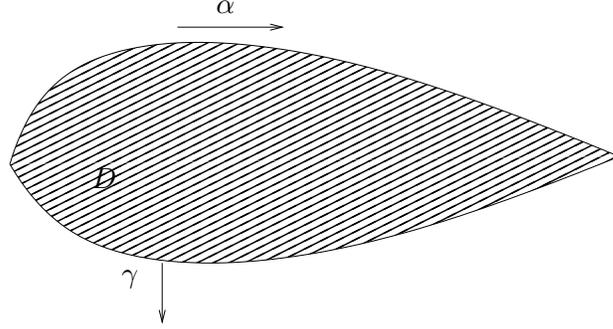}
\caption{Transport of $D$ with $\alpha$ is small compared to the
mass of $D$ while that with respect to $\gamma$ is large compared
to $D$ } \label{avg}
\end{figure}


\subsection{The Transport Computation}\label{comp}

\paragraph{}
Given a finitely presented group $G$, let the $\Gamma$ be the dual Cayley graph (defined in Section \ref{var}) corresponding to the universal cover of $n$-complex $X$ corresponding to $G$.
 This graph is infinite but it is locally finite. The edges are directed and
labeled, also there is only one outgoing (incoming) edge with a
given label at any vertex. $\Gamma$ is a Cayley graph with respect to the presentation of the groups defined in Section \ref{dual}. Also the graph is endowed with
the path metric and each edge is isomorphic to the unit interval $[0,1]$.

As defined in the previous section, in the following discussion
the vertex set of $\Gamma$ will be denoted by $V_\Gamma$ and edge
set by $E_\Gamma$. Next, consider the subset $D$ in $V_\Gamma$ corresponding to the $n$-cells in the image of $f:(D^n,S^{n-1})\rightarrow X$. Let us consider the case when $f$ is an embedding.
Then we denote the map $\phi_D$ by the characteristic function $\chi_D : V_\Gamma \rightarrow
 \{0,1\}$ defined by $\chi_D(\sigma)= 1$ when $\sigma \in D$, otherwise $\chi_D(\sigma)=0$. In this case $||\chi_D||=|D|$, where $|D|$ denotes the number of vertices in $D$.

Next, $\nabla\chi_D : \partial_V D \rightarrow \{0,1\}$ is defined in the following way,\\
 $\nabla\chi_D(\tau) = |\chi_D(t(\tau)) - \chi_D(i(\tau))|$, where $i,t: E_\Gamma \rightarrow V_\Gamma$ gives the initial and
 terminal vertices respectively of any edge in $E_\Gamma$.

Therefore, $| \partial_{V} D |\hspace{0.1cm}=\hspace{0.1cm}
\displaystyle\sum_{\tau \in \partial_{V} D} \hspace{0.2cm}| \chi_D(t(\tau)) - \chi_D(i(\tau))
|$.

Let $\gamma \in {B}(r)\subset \Gamma$, where ${B}(r)$ represents a ball of radius $r$ in the graph.
 We choose $r$ large enough such that $|{B}(r)| \geq 2| D |> |{B}(r-1)|$.\\

Varopoulos Transport $T_D^\gamma
\hspace{0.2cm}=\hspace{0.2cm}|D\gamma \setminus D|$\\

Average Transport
$\widehat{T_D^\gamma}\hspace{0.2cm}=\hspace{0.2cm}\frac{1}{|{B}(r)|}\displaystyle\sum_{\gamma\in
{B}(r)} \hspace{0.2cm} T_D^\gamma$.\\

The following is a variation of an argument given by Varopoulos, \cite{V}.

\begin{proposition}\label{ave}$\hspace{0.5cm}\widehat{T_D^\gamma}\hspace{0.2cm}\geq
 \hspace{0.2cm}\frac{1}{2}|D|$.
 \end{proposition}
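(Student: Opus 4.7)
The plan is to prove the inequality by a double-counting argument (swapping the order of summation over $\gamma$ and over vertices of $D$), using that the dual graph $\Gamma$ is the Cayley graph of $G$ (Proposition \ref{cay}) so that translation by $\gamma$ is a bijection on the vertex set. This is the standard Fubini-style argument in the Varopoulos framework, adapted to the combinatorial setting here.

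Concretely, identify $V_\Gamma$ with $G$ and write $T_D^\gamma=|D\gamma\setminus D|=\sum_{v\in D}\mathbf{1}[v\gamma\notin D]$. Then
\[
\widehat{T_D^\gamma}=\frac{1}{|B(r)|}\sum_{\gamma\in B(r)}\sum_{v\in D}\mathbf{1}[v\gamma\notin D]=\frac{1}{|B(r)|}\sum_{v\in D}\bigl|\{\gamma\in B(r):v\gamma\notin D\}\bigr|.
\]
For each fixed $v\in D$, the map $\gamma\mapsto v\gamma$ is injective on $G$ (it is left-multiplication in the group), so the number of $\gamma\in B(r)$ with $v\gamma\in D$ is at most $|D|$. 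Consequently $|\{\gamma\in B(r):v\gamma\notin D\}|\geq |B(r)|-|D|$.

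Now I would invoke the defining property of $r$, namely $|B(r)|\geq 2|D|$, which gives $|B(r)|-|D|\geq \tfrac{1}{2}|B(r)|$. Substituting back into the displayed equation,
\[
\widehat{T_D^\gamma}\geq\frac{1}{|B(r)|}\sum_{v\in D}\tfrac{1}{2}|B(r)|=\tfrac{1}{2}|D|,
\]
which is exactly the desired bound.

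The main substantive step is the injectivity observation combined with the choice of $r$; the rest is just rearrangement of summation. The only thing that requires care is the identification used at the very start: one must check that although $\Gamma$ was originally defined as the dual graph to the cell decomposition of $\widetilde{X}$, the Cayley-graph description from Lemma \ref{generator} and Proposition \ref{cay} lets one view left/right-translation by $\gamma\in G$ as a bona fide permutation of $V_\Gamma$, so that $|D\gamma|=|D|$ and the preimage bound $|\{\gamma:v\gamma\in D\}|\leq |D|$ holds with no overcounting. Once this is in place, the proof is essentially a one-line Fubini computation, so I do not anticipate any serious obstacle.
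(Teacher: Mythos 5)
Your proof is correct and follows essentially the same route as the paper's: swap the order of summation over $\gamma$ and over vertices of $D$, then for each fixed vertex bound the number of $\gamma\in B(r)$ keeping it inside $D$ by $|D|$ (the paper writes this as $|B_\sigma(r)\cap D|\leq |D|$, which is the same injectivity observation), and finally invoke $|B(r)|\geq 2|D|$. Your reformulation in terms of indicator functions and the explicit remark about left-translation being a bijection are just cleaner bookkeeping of the identical argument.
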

 \begin{proof}

$\hspace{0.3cm}\widehat{T_D^\gamma}\hspace{0.2cm}=\hspace{0.2cm}\frac{1}{|{B}(r)|}\displaystyle\sum_{\sigma,\gamma}|
\{(\sigma,\gamma)\hspace{0.2cm}|
\hspace{0.2cm}\sigma \in D , \hspace{0.2cm}\sigma\gamma \in (V_\Gamma
\setminus D),\hspace{0.2cm}\gamma \in B(r)\}\hspace{0.2cm}|$\\

$\hspace{2.0cm}= \hspace{0.2cm}
\frac{1}{|{B_1}(r)|}\displaystyle\sum_{\gamma\in B(r)}\displaystyle\sum_{\sigma \in
D}\hspace{0.2cm}\left(\chi_D(\sigma) - \chi_D(\sigma\gamma)\right)$\\

$\hspace{2.0cm}=\hspace{0.2cm}\displaystyle\sum_{\sigma \in
D}\hspace{0.2cm}\frac{1}{| B(r)|}\displaystyle\sum_{\gamma \in
B(r)}\hspace{0.2cm}\left(\chi_D(\sigma)-\chi_D(\sigma\gamma)\right)$\\

$\hspace{2.0cm}=\hspace{0.2cm}\displaystyle\sum_{\sigma\in
D}\hspace{0.2cm}\left(\hspace{0.2cm}\frac{| B(r)|}{|
B(r)|}\hspace{0.2cm}\chi_D(\sigma)\hspace{0.2cm} -\hspace{0.2cm}
\frac{\displaystyle\sum_\gamma \hspace{0.2cm} \chi_D(\sigma\gamma)}{|
B(r)|}\hspace{0.2cm}\right)$\\

$\hspace{2.0cm}=\hspace{0.2cm}\displaystyle\sum_{\sigma\in D}\hspace{0.2cm} \left(
1 \hspace{0.2cm}- \hspace{0.2cm}\frac{|
B_\sigma(r)\cap D |}{| B(r)|}\right)$, where $B_\sigma(r)$ is a ball  of radius $r$ at vertex $\sigma$.\\

But since we assumed that $|{B}(r)| \hspace{0.2cm}>
2|D|$, so we have,\\

$\hspace{2.65cm}\widehat{T_D^\gamma}\hspace{0.2cm}\geq
\hspace{0.2cm}\displaystyle\sum_{\sigma \in
D}( 1 - \frac{1}{2})$,\\

 $\hspace{1.6cm}$ or,
$\hspace{0.2cm}\widehat{T_D^\gamma}\hspace{0.2cm}\geq
\hspace{0.2cm}\frac{1}{2}|D|$\\

So there is ${\gamma_0} \in B_1(r)$ such that $T_D^{\gamma_0} \geq
\frac{|D|}{2} $.

\end{proof}


Next we obtain an upper bound for the transport $T_D^\gamma$ in the following proposition.
\begin{proposition}
\label{lemma 1} $T_D^\gamma\hspace{0.2cm} \leq
\hspace{0.2cm}l_\gamma \hspace{0.2cm}| \partial_{V} D|$; where
$l_\gamma$ is the length of $\gamma$.
\end{proposition}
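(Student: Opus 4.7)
The plan is to bound the transport by counting, along each of the $l_\gamma$ ``time steps'' of $\gamma$, how many $g_i$-labeled edges the characteristic function jumps across; each such jump is a boundary edge.

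First I would factor $\gamma = g_1 g_2 \cdots g_{l_\gamma}$ as a word in the generating set $\mathcal{A}_0 \cup \mathcal{A}_0^{-1}$ (of Lemma \ref{generator}) of length $l_\gamma$. For each vertex $v \in V_\Gamma$, let $v_0 = v$, $v_1 = vg_1$, $v_2 = vg_1 g_2$, $\ldots$, $v_{l_\gamma} = v\gamma$, so that the $v_{i-1} \to v_i$ step traverses an edge of $\Gamma$ labeled by the generator $g_i$.

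Next I would observe that if $v \in D$ and $v\gamma \notin D$, then $\chi_D(v_0) = 1$ and $\chi_D(v_{l_\gamma}) = 0$, so $\chi_D$ must change along at least one step of the path $v_0, v_1, \ldots, v_{l_\gamma}$. Quantitatively, since each $\chi_D(v_i) \in \{0,1\}$, the telescoping inequality
\begin{equation*}
\chi_D(v)\bigl(1 - \chi_D(v\gamma)\bigr) \;\leq\; |\chi_D(v) - \chi_D(v\gamma)| \;\leq\; \sum_{i=1}^{l_\gamma} |\chi_D(v_{i-1}) - \chi_D(v_i)|
\end{equation*}
holds. Summing over $v \in V_\Gamma$ and interchanging the order of summation gives
\begin{equation*}
T_D^{\gamma} \;=\; \sum_{v} \chi_D(v)\bigl(1 - \chi_D(v\gamma)\bigr) \;\leq\; \sum_{i=1}^{l_\gamma} \sum_{v \in V_\Gamma} |\chi_D(v_{i-1}) - \chi_D(v_i)|.
\end{equation*}

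For the final step I would fix $i$ and note that, as $v$ ranges over $V_\Gamma$, the endpoint pair $(v_{i-1}, v_i)$ ranges bijectively over all oriented edges of $\Gamma$ carrying label $g_i$ (since left multiplication by $v$ acts freely and transitively on vertices, and $g_i$ is a single generator). A term in the inner sum is nonzero precisely when $\chi_D$ takes different values on the two endpoints, i.e.\ precisely when the corresponding edge of $\Gamma$ lies in $\partial_V D$ (by the definition in Section \ref{var}). Therefore the inner sum is bounded by the total number of $g_i$-labeled boundary edges, hence by $|\partial_V D|$. Summing the $l_\gamma$ inequalities yields $T_D^\gamma \leq l_\gamma \cdot |\partial_V D|$.

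The only minor subtlety, and what I would be careful about, is the bookkeeping in the final step: one has to be sure that for each fixed $i$ the map $v \mapsto (v_{i-1}, v_i)$ enumerates each edge of a given label exactly once rather than overcounting, so that the inner sum genuinely gives a subset of boundary edges and not a multiset with repetitions. Once this bijection is in hand, the bound is immediate and no further estimation is required.
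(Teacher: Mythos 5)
Your proof is correct and follows essentially the same route as the paper: factor $\gamma$ into generators, telescope the indicator difference along the intermediate path $v_0, v_1, \ldots, v_{l_\gamma}$, and bound each step's contribution by boundary-edge crossings. Your interchange of the order of summation followed by the bijection between vertices and $g_i$-labeled edges is a cleaner formulation of what the paper phrases as ``each boundary edge appears at most $l_\gamma$ times'' via uniqueness of path liftings in the Cayley graph, and the bijectivity you flag as a subtlety is handled exactly as you suggest, being the freeness of left multiplication on $V_\Gamma$.
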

\begin{proof}
The path corresponding to the word $\gamma$ can be expressed as a
sequence of the generators in $\Gamma$, namely,
$a_1a_2a_3....a_{l_\gamma}$
where $a_i= \alpha^{\pm1}$ or $=\beta^{\pm1}$ for $1\leq i \leq l_\gamma$ .\\

\textbf{\textsl{Notation}:} Let $a_1a_2...a_k = \alpha_k$ for $1\leq k \leq
l_\gamma$ and $\alpha_0 $ is the identity of the group.\\

The  Varopoulos Transport  as defined before is,

$\hspace{1.0cm}T_D^\gamma \hspace{0.2cm}=\hspace{0.2cm}|D\gamma \setminus D|$

$\hspace{0.5cm} \therefore
T_D^\gamma\hspace{0.2cm}=\hspace{0.2cm}\displaystyle\sum_{\sigma\in D}|
\{(\sigma,\gamma)\hspace{0.2cm}|
\hspace{0.2cm}y \in D , \hspace{0.2cm}\sigma\gamma \in (V_\Gamma
\setminus D) ,\hspace{0.2cm}\gamma \in
B_1(r)\}\hspace{0.2cm}|$

$\hspace{1.65cm}= \hspace{0.2cm} \displaystyle\sum_{\sigma \in
D}\hspace{0.2cm}| \chi_D(\sigma) - \chi_D(\sigma\gamma)|$

Now using the sequence and notation defined above, we can write,

$\hspace{1.0cm}T_D^\gamma
\hspace{0.2cm}\leq\hspace{0.2cm}\displaystyle\sum_{\sigma \in
D}\hspace{0.2cm}\left(\displaystyle\sum_{i=1}^{l_\gamma}|\chi_D(\sigma\alpha_i)
- \chi_D(\sigma \alpha_{i-1})|\right)$\\

So in the inner sum, in the expression above, the terms have value
either $0$ or $1$, the terms which have value $1$, represent
boundary edges.

In order to establish the upper bound for the transport of $D$ by
$\gamma \in \Gamma$, we will show that each of the boundary edge
mentioned above appears at most $l_\gamma$ times in the sum. So,
we start with the transport of a vertex $\sigma_i \in D$ via the
path $\gamma$. Let us denote the edge between the vertices $\sigma_i\alpha_{j-1}$ and $\sigma_i\alpha_j$
by $\tau$ where $1\leq j \leq l_\gamma$.
Now let us express the path $\gamma\hspace{0.2cm}$ as the
sequence $\gamma_1\tau\gamma_2$; where
$\gamma_1,\gamma_2$ are two sub-paths of $\gamma$ such that the initial vertex of $\gamma_1$ is $\sigma_i$ while the
terminal vertex of $\gamma_2$ is $\sigma_i\gamma$, and
$\tau$ is the label of the $j^{th}$ edge of $\gamma$. Then, by uniqueness of path liftings in a Cayley
graph, it is known that $\gamma_1$ and $\gamma_2$ are both unique
with respect to initial vertex $\sigma_i$. In other words, the
paths corresponding to $\gamma$ originating from vertices of $D$
other than $\sigma_i$, do not have $\tau$ as the $j^{th}$ edge. So if the
path $\gamma$ originating from vertex $\sigma_j \in D$, (where
$\sigma_j\neq \sigma_i$), can be expressed as $\gamma_3
\tau\gamma_4$, then, here $\tau$ is the label
for say the $k^{th}, (k\neq j)$ edge of this path while $\gamma_3$
and $\gamma_4$ are both unique sub-paths with respect to
$\sigma_j$. So a particular edge in path $\gamma$ can appear at
most $l_\gamma$ times.

Therefore, $T_D^\gamma \hspace{0.2cm}\leq \hspace{0.2cm} l_\gamma
\displaystyle\sum_{\sigma_i,\sigma_j \in V_\Gamma}\hspace{0.2cm}|
\chi_D(\sigma_i)-\chi_D(\sigma_j)|$.\\

 So, $T_D^\gamma \hspace{0.2cm}\leq \hspace{0.2cm} l_\gamma
\hspace{0.2cm} |\partial_{V} D | $.
\end{proof}

Next we will consider $f:(D^n,S^{n-1})\rightarrow X$ to be an immersion, and so instead of a characteristic function we consider a
  non-negative, integer-valued function $\phi_D$ (Section \ref{var}) and show that the Varopoulos argument works in this case too.
Assume as before, that $\gamma \in {B_1}(r)\subset G$, where
${B_1}(r)$ represents a ball of radius $r$ centered at the
identity in the graph. We choose $r$ large enough such that $|{B_1}(r)| \geq 2\parallel \phi_D \parallel > |{B_1}(r-1)|$.

Varopoulos Transport $T_D^\gamma = \displaystyle\sum_{\sigma \in D} |
\phi_D(\sigma)-\phi_D(\sigma \gamma) | $.

$\therefore$ Average Varopoulos Transport is given by,

$\hspace{2.5cm}\widehat{T_D^\gamma}\hspace{0.2cm}=\hspace{0.2cm}\frac{1}{|{B_1}(r)|}\displaystyle\sum_{\gamma\in
{B_1}(r)} \hspace{0.2cm} T_D^\gamma$ .\\

\textbf{Note}: The definitions of $||\phi_D||$, $||\nabla\phi_D||$ used below can be found as Remark \ref{phi1} and Definition \ref{vboundary} respectively in Section \ref{var}.\\

The following result is a variation of an argument given by Coulhon and Saloff-Coste, \cite{Co}.\\

\begin{proposition}  \label{prop1}
$\widehat{T_D^\gamma} \geq \frac{1}{2}||\phi_D||$
\end{proposition}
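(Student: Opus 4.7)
The plan is to adapt the characteristic-function argument from Proposition~\ref{ave} to the integer-valued function $\phi_D$. The key observation is that $\phi_D$ is supported exactly on $D$ and takes positive integer values there, so in particular $|D| \leq \|\phi_D\|$; combined with the hypothesis $|B_1(r)| \geq 2\|\phi_D\|$ this yields $|B_1(r)| \geq 2|D|$, which is the quantitative ingredient that makes the counting go through.

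Next, I would replace the identity $|\chi_D(\sigma)-\chi_D(\sigma\gamma)| = \chi_D(\sigma)(1-\chi_D(\sigma\gamma)) + \chi_D(\sigma\gamma)(1-\chi_D(\sigma))$ by the one-sided estimate
\[
|\phi_D(\sigma)-\phi_D(\sigma\gamma)| \;\geq\; \phi_D(\sigma)\,\mathbf{1}[\sigma\gamma\notin D],
\]
valid because the right-hand side is $\phi_D(\sigma)$ when $\phi_D(\sigma\gamma)=0$ and $0$ otherwise, so it is always bounded above by $|\phi_D(\sigma)-\phi_D(\sigma\gamma)|$. (Note that restricting the outer sum to $\sigma\in D$ loses nothing, since $\phi_D(\sigma)=0$ for $\sigma\notin D$.)

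Plugging this into the definition of the average transport and swapping the order of summation, I would write
\[
\widehat{T_D^\gamma} \;\geq\; \frac{1}{|B_1(r)|}\sum_{\sigma\in D}\phi_D(\sigma)\,\bigl|\{\gamma\in B_1(r): \sigma\gamma\notin D\}\bigr|.
\]
For each fixed $\sigma\in D$ the translate $\{\sigma\gamma:\gamma\in B_1(r)\}$ is the ball $B_\sigma(r)$, which has exactly $|B_1(r)|$ vertices, and at most $|B_\sigma(r)\cap D|\leq |D|$ of them lie in $D$. Therefore
\[
\bigl|\{\gamma\in B_1(r): \sigma\gamma\notin D\}\bigr| \;\geq\; |B_1(r)|-|D|.
\]
Substituting gives $\widehat{T_D^\gamma}\geq \bigl(1-\tfrac{|D|}{|B_1(r)|}\bigr)\|\phi_D\|$, and the bound $|B_1(r)|\geq 2\|\phi_D\|\geq 2|D|$ from the first paragraph reduces this to $\widehat{T_D^\gamma}\geq \tfrac{1}{2}\|\phi_D\|$, as required.

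The main conceptual point — and the only place where the argument differs from the characteristic-function version — is the inequality $|D|\leq \|\phi_D\|$, which is what allows the hypothesis on $|B_1(r)|$, stated in terms of $\|\phi_D\|$, to still control the geometric quantity $|D|$ that naturally appears when one counts how many translates of $\sigma$ remain inside the support of $\phi_D$. No routine computation beyond this is required; the absolute-value inequality $|a-b|\geq a\,\mathbf{1}[b=0]$ for $a,b\geq 0$ neatly sidesteps the symmetry issues one might fear when replacing the $\{0,1\}$-valued $\chi_D$ by a general nonnegative $\phi_D$.
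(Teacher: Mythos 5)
Your proof is correct and reaches the same $\tfrac12$ constant, but it follows a slightly different estimate than the paper's. The paper's proof of Proposition~\ref{prop1} applies the reverse triangle inequality to write $|\phi_D(\sigma)-\phi_D(\sigma\gamma)| \geq \phi_D(\sigma)-\phi_D(\sigma\gamma)$, then sums over $\gamma \in B_1(r)$ and bounds the resulting partial sum by the total mass, $\sum_{\gamma\in B_1(r)}\phi_D(\sigma\gamma)\leq\|\phi_D\|$, never passing through the cardinality $|D|$ at all. You instead discard the case $\sigma\gamma\in D$ entirely via the one-sided bound $|\phi_D(\sigma)-\phi_D(\sigma\gamma)|\geq \phi_D(\sigma)\mathbf{1}[\sigma\gamma\notin D]$, count displaced translates using $|B_\sigma(r)\cap D|\leq |D|$, and then bridge back to $\|\phi_D\|$ via the inequality $|D|\leq\|\phi_D\|$. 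Your route is the direct weighted generalization of the characteristic-function computation in Proposition~\ref{ave} (the $\bigl(1-\tfrac{|B_\sigma(r)\cap D|}{|B(r)|}\bigr)$ structure reappears with a $\phi_D(\sigma)$ weight), and it has the small advantage of never producing potentially negative intermediate terms, whereas the paper's route is more economical in that it stays entirely in the $\|\phi_D\|$ framework and never needs the observation $|D|\leq\|\phi_D\|$. Both are valid Varopoulos-averaging arguments relying on the same calibration $|B_1(r)|\geq 2\|\phi_D\|$.
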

  \begin{proof}
$\hspace{3.0cm}
\widehat{T_D^\gamma}=\hspace{0.2cm}\frac{1}{|{B_1}(r)|}\displaystyle\sum_{\gamma\in
{B_1}(r)}\displaystyle\sum_{\sigma \in D} | \phi_D(\sigma)-\phi_D(\sigma \gamma) | $\\

$\hspace{4.5cm}=\hspace{0.2cm}\displaystyle\sum_{\sigma \in
D}\frac{1}{|{B_1}(r)|}\displaystyle\sum_{\gamma\in
{B_1}(r)} | \phi_D(\sigma)-\phi_D(\sigma \gamma) | $\\

$\hspace{4.5cm}\geq \hspace{0.2cm}\displaystyle\sum_{\sigma \in
D}\frac{1}{|{B_1}(r)|}\displaystyle\sum_{\gamma\in {B_1}(r)} |
\phi_D(\sigma)|- | \phi_D(\sigma \gamma) |$\\

$\hspace{4.5cm}\geq \hspace{0.2cm}\displaystyle\sum_{\sigma \in
D}\frac{1}{|{B_1}(r)|}\hspace{0.2cm} \left(|
B_1(r)| \phi_D(\sigma)- \displaystyle\sum_{\gamma\in
{B_1}(r)}\phi_D(\sigma \gamma)\right)$\\

$\hspace{4.5cm}\geq \hspace{0.2cm}\displaystyle\sum_{\sigma \in
D}\hspace{0.2cm}\left( \phi_D(\sigma) -
\frac{1}{|{B_1}(r)|}\displaystyle\sum_{\gamma\in
{B_1}(r)}\phi_D(\sigma\gamma)\right) $\\

Since, $\displaystyle\sum_{\gamma\in {B_1}(r)}\phi_D(\sigma\gamma)
\hspace{0.2cm}
\leq \hspace{0.2cm}\parallel \phi_D \parallel $, we have,\\

 $\hspace{3.5cm} \widehat{T_D^\gamma}\hspace{0.2cm}\geq \hspace{0.2cm}\displaystyle\sum_{\sigma \in
D}\hspace{0.2cm}\left(  \phi_D(\sigma) - \frac{\parallel
\phi_D \parallel}{| B_1(r) |}\right)$\\

According to our initial assumption, $\frac{\parallel \phi_D
\parallel}{| B_1(r) |} \leq \frac{1}{2}$, and that implies,
$\frac{\parallel \phi_D \parallel}{| B_1(r) |} \leq
\frac{\phi_D(\sigma)}{2}$, for any particular $\sigma \in D$.\\

$\hspace{3.5cm}\therefore \widehat{T_D^\gamma}\hspace{0.2cm}\geq
\hspace{0.2cm}\displaystyle\sum_{\sigma \in D} \frac{\phi_D(\sigma)}{2} =
\frac{1}{2}\parallel \phi_D \parallel$\\

In particular, $\exists \gamma_0 \in B_1(r)$ such that,
$T_D^{\gamma_0}
\geq \frac{\parallel \phi_D \parallel}{2}$.
\end{proof}

\begin{proposition}\label{prop2}
 $T_D^\gamma \leq l_\gamma||\nabla\phi_D||$, where $l_\gamma$ denotes the length of the
path/word $\gamma$.
\end{proposition}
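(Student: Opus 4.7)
The plan is to reduce $T_D^\gamma$ to a sum indexed by edges of $\Gamma$ by walking along $\gamma$ one generator at a time. Write $\gamma = a_1 a_2 \cdots a_{l_\gamma}$ with each $a_j$ in the generating set, and set $\alpha_k = a_1 \cdots a_k$ for $1 \leq k \leq l_\gamma$ with $\alpha_0 = e$. For each starting vertex $\sigma \in D$, the word $\gamma$ traces out a path $\sigma = \sigma\alpha_0, \sigma\alpha_1, \ldots, \sigma\alpha_{l_\gamma} = \sigma\gamma$ in $\Gamma$. Applying the triangle inequality telescopically gives
\begin{equation*}
|\phi_D(\sigma) - \phi_D(\sigma\gamma)| \leq \sum_{i=1}^{l_\gamma} |\phi_D(\sigma\alpha_{i-1}) - \phi_D(\sigma\alpha_i)|,
\end{equation*}
and the $i$-th summand on the right equals $\nabla\phi_D(\tau_{\sigma,i})$, where $\tau_{\sigma,i}$ denotes the edge of $\Gamma$ between $\sigma\alpha_{i-1}$ and $\sigma\alpha_i$ (labeled by $a_i$).

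Summing over $\sigma \in D$ and swapping the order of summation yields
\begin{equation*}
T_D^\gamma \leq \sum_{i=1}^{l_\gamma} \sum_{\sigma \in D} \nabla\phi_D(\tau_{\sigma,i}).
\end{equation*}
The main step is to control the inner sum for each fixed $i$. The key observation is that the assignment $\sigma \mapsto \tau_{\sigma,i}$ is injective on $D$: the initial vertex of $\tau_{\sigma,i}$ is $\sigma\alpha_{i-1}$, which recovers $\sigma$ uniquely as $\sigma = (\sigma\alpha_{i-1})\alpha_{i-1}^{-1}$. This is the unique-path-lifting property of the Cayley graph $\Gamma$, and it is precisely the bookkeeping device used in the proof of Proposition~\ref{lemma 1}. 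Consequently, for each fixed $i$ the inner sum is a sum of $\nabla\phi_D$ over a collection of pairwise distinct edges, and is therefore bounded by $\sum_{\tau \in E_\Gamma} \nabla\phi_D(\tau) = ||\nabla\phi_D||$, where the last equality uses that $\nabla\phi_D$ vanishes outside $\partial_V D$ by Definition~\ref{vboundary}.

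Combining the two bounds gives $T_D^\gamma \leq l_\gamma ||\nabla\phi_D||$, which is the desired inequality. The only genuine subtlety is the injectivity step; without it one would risk multiply counting contributions from individual boundary edges. Since $\Gamma$ is a Cayley graph, however, this injectivity is automatic and requires essentially no extra work beyond the characteristic-function case treated in Proposition~\ref{lemma 1}, so the argument is a direct weighted analogue of that earlier proof.
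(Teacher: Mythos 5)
Your proof is correct and follows essentially the same route as the paper: the same telescoping decomposition of $|\phi_D(\sigma)-\phi_D(\sigma\gamma)|$ along $\gamma$, and the same unique-path-lifting observation that for fixed $i$ the assignment $\sigma\mapsto\tau_{\sigma,i}$ is injective, so each boundary edge is counted at most $l_\gamma$ times. Your explicit swap of the order of summation is a slightly cleaner way of organizing exactly the bookkeeping the paper imports from its proof of Proposition~\ref{lemma 1}.
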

\begin{proof}
We will use the same argument as in proof of Lemma \ref{lemma 1} to
show this. The path corresponding to the word $\gamma$ can be
expressed as before by a sequence of the generators in $\Gamma$,
namely, $a_1a_2a_3....a_{l_\gamma}$
where $a_i= \alpha^{\pm1}$ or $=\beta^{\pm1}$ for $1\leq i \leq l_\gamma$ .\\

\textbf{\textsl{Notation}:} Let $a_1a_2...a_k = \alpha_k$ for $1\leq k \leq
l_\gamma$ and $\alpha_0 $ is the identity of the group.\\

The Varopoulos Transport as defined before is,

$\hspace{2.0cm}T_D^\gamma \hspace{0.2cm}=\hspace{0.2cm}|D\gamma \setminus D|$

$\hspace{1.7cm} \therefore
T_D^\gamma\hspace{0.2cm}=\hspace{0.2cm}\displaystyle\sum_{\sigma\in D}\hspace{0.2cm}| \phi_D(\sigma) - \phi_D(\sigma\gamma)|$

As before,

$\hspace{2.0cm}T_D^\gamma
\hspace{0.2cm}\leq\hspace{0.2cm}\displaystyle\sum_{\sigma \in
D}\hspace{0.2cm}\left(\displaystyle\sum_{i=1}^{l_\gamma}|\phi_D(\sigma\alpha_i)
- \phi_D(\sigma \alpha_{i-1})|\right)$\\

The terms in the inner sum are either zero or a natural number. In
the case when they are non-zero, they represent boundary edges in
the Varopoulos sense.

So as in the proof of Lemma \ref{lemma 1}, each of these
afore-mentioned boundary edges appear in the sum at most
$l_\gamma$ times. Therefore,

$\hspace{4.0cm}T_D^\gamma \hspace{0.2cm} \leq
\hspace{0.2cm}l_\gamma \displaystyle\sum_{\sigma_i\sigma_j \in V_\Gamma}|
\phi_D(\sigma_i)-\phi_D(\sigma_j)| $,

which means, $T_D^\gamma \hspace{0.2cm} \leq
\hspace{0.2cm}l_\gamma \parallel \nabla\phi_D\parallel$.
\end{proof}

\subsection{Isoperimetric Inequalities for groups of Polynomial growth}
\subsubsection{\textbf{A 2-dimensional Example}}

\paragraph{}
Here we will discuss the 2-dimensional example $\Z^2$. Let us consider the presentation $\langle a,b \hspace{0.1cm}|\hspace{0.1cm}[a,b]\rangle$ for $\Z^2$. Let $\widetilde{X}$ be the universal cover of the 2-complex $X$ corresponding to the presentation given above for $\Z^2$.
As before let us denote a ball of radius $r$ centered at the identity in $\widetilde{X}$ by $B_1(r)$.


Let us choose $r$ such that $ | B_1(r) |\geq 2 ||\phi_D||
 > | B_1(r-1) |$. Also,
$| B_1(r) | \hspace{0.2cm}\sim \hspace{0.2cm}\mathcal{O}(r^2)$.

From the propositions above, we already know that:\\
$\frac{1}{2}||\phi_D||\hspace{0.2cm} \leq T_D^{ \gamma_0}
\hspace{0.2cm} \leq \hspace{0.2cm}l_{\gamma_0} \hspace{0.2cm}
|\partial_{V} D |$ for some $\gamma_0 \in B_1(r)$.

$\hspace{2cm}\therefore ||\phi_D|| \hspace{0.2cm}\leq \hspace{0.2cm} 2
l_{\gamma_0 } \hspace{0.2cm} ||\nabla\phi_D||$

$\hspace{2cm}\therefore ||\phi_D|| \hspace{0.2cm}\preceq \hspace{0.2cm}||\phi_D||^{\frac{1}{2}}\hspace{0.2cm} |||\nabla\phi_D||$ ;
since $l_{\gamma_0} \leq \hspace{0.2cm} r$

$\hspace{2cm}\therefore ||\phi_D|| \hspace{0.2cm}\preceq
\hspace{0.2cm}\hspace{0.2cm}||\nabla\phi_D||^{2}$\\
When the 2-disc along with its boundary circle is
embedded in  $\widetilde{X}$ via the transverse map $f: (D^2,S^1) \rightarrow \widetilde{X}$, $||\nabla\phi_D||\hspace{0.2cm}= \hspace{0.2cm}
|\partial D^2|$. On the other hand,
in the case when we have a reduced diagram $f: (D^2,S^1) \rightarrow \widetilde{X}$, such that the disc and its boundary are not embedded, then by Lemma \ref{bound} $||\nabla\phi_D||\leq |\partial D^2|$. Hence we have the following isoperimetric inequality.

$\hspace{2cm}\therefore ||\phi_D|| \preceq
\hspace{0.05cm}
||\nabla\phi_D||^{2}\leq (2Const.)^{2}
 |\partial D^2|^{2}$.

$\hspace{2cm}\therefore Vol^2(D^2) \preceq |\partial D^2|^{2}$.

\subsubsection{\textbf{A 3-dimensional Example}}

\paragraph{}
In this section we present the an upper bound for the second-order Dehn functions of the 3-dimensional group $\mathcal{H}$ and consequently all cocompact lattices in the Nil geometry. In other words we complete the proof of Theorem \ref{th1} here.
Let $M$ be the 3-manifold corresponding to the lattice $\mathcal{H}$ in the Nil geometry mentioned above in Example \ref{lattice1} (along with the triangulation shown). Let $\widetilde{X}$ be its universal cover. So, one can find numerous copies of $M$ inside $\widetilde{X}$. Let $B_n$ represent a ball of radius $n$ in $\widetilde{X}$.\\

\begin{lemma}$||\phi_D|| \hspace{0.05cm}\preceq
\hspace{0.05cm}||\nabla\phi_D||^\frac{4}{3}$
\end{lemma}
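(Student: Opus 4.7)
The plan is to assemble the ingredients already developed in Section~\ref{comp} (Propositions~\ref{prop1} and~\ref{prop2}) and combine them with the polynomial growth rate of the Heisenberg group. From Proposition~\ref{prop1} there exists $\gamma_0 \in B_1(r)$ with $T_D^{\gamma_0} \geq \tfrac{1}{2}\|\phi_D\|$, and from Proposition~\ref{prop2} the same transport satisfies $T_D^{\gamma_0} \leq l_{\gamma_0}\|\nabla \phi_D\|$. Since $\gamma_0 \in B_1(r)$ we have $l_{\gamma_0} \leq r$, so chaining the two inequalities yields
\begin{equation*}
\tfrac{1}{2}\|\phi_D\| \;\leq\; r\,\|\nabla\phi_D\|.
\end{equation*}

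The next step is to control $r$ in terms of $\|\phi_D\|$. Recall $r$ was chosen minimally so that $|B_1(r)| \geq 2\|\phi_D\| > |B_1(r-1)|$. For the lattice $\mathcal{H}$, the universal cover carries the Nil geometry whose volume growth is quartic, i.e., $|B_1(\rho)| \asymp \rho^{4}$. (By Proposition~\ref{cay} the dual graph $\Gamma$ is quasi-isometric to $\widetilde{X}$, and polynomial growth degree is a quasi-isometry invariant, so the growth of $\Gamma$ is also $\asymp \rho^{4}$; alternatively one invokes the Bass--Guivarc'h formula for $\mathcal{H}$ directly.) The strict inequality $|B_1(r-1)| < 2\|\phi_D\|$ therefore forces $(r-1)^{4} \lesssim \|\phi_D\|$, hence $r \lesssim \|\phi_D\|^{1/4}$.

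Substituting this estimate back yields
\begin{equation*}
\|\phi_D\| \;\lesssim\; \|\phi_D\|^{1/4}\,\|\nabla\phi_D\|,
\end{equation*}
and dividing through gives $\|\phi_D\|^{3/4} \lesssim \|\nabla\phi_D\|$, i.e., $\|\phi_D\| \preceq \|\nabla\phi_D\|^{4/3}$, which is the claim. Note that this is exactly parallel to the computation worked out in the $\Z^{2}$ example, where the growth exponent was $2$ and produced the exponent $2$ in the isoperimetric inequality; here the exponent $4$ for Nil growth produces the exponent $4/3$.

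The step requiring most care is the growth assertion $|B_1(r)| \asymp r^{4}$ in $\Gamma$. One must be sure that the balls appearing in Propositions~\ref{prop1} and~\ref{prop2}, which are measured in the dual graph $\Gamma$ with respect to the generating set $\mathcal{A}_0$ of Lemma~\ref{generator}, inherit the same polynomial growth as $\mathcal{H}$ itself. This is where Proposition~\ref{cay} is essential: the quasi-isometry $G \simeq \Gamma \simeq \widetilde{X}$ implies that the growth function of $\Gamma$ is $\sim$-equivalent to that of $\mathcal{H}$, and since two polynomial growth functions that are coarse-Lipschitz equivalent share the same degree, the estimate $|B_1(r)| \asymp r^{4}$ holds up to multiplicative constants that are absorbed into the $\preceq$ symbol. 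Everything else in the argument is a direct transcription of the two-dimensional model computation.
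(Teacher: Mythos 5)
Your argument is the same as the paper's: combine Propositions~\ref{prop1} and~\ref{prop2} to get $\tfrac{1}{2}\|\phi_D\|\leq r\|\nabla\phi_D\|$, use the quartic growth of Nil (via Bass--Guivarc'h and the quasi-isometry $\Gamma\simeq\widetilde{X}$) to bound $r\preceq\|\phi_D\|^{1/4}$, and then divide. Your explicit remark that the growth exponent of $\Gamma$ is inherited from $\mathcal{H}$ through Proposition~\ref{cay} is a worthwhile clarification that the paper leaves implicit, but the route is otherwise identical.
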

\begin{proof}
  Let $\Gamma$ denote the dual Cayley graph embedded in $\widetilde{X}$ corresponding to the generating set $\mathcal{A}_0$ defined in Lemma \ref{generator} part $(i)$ where $\widetilde{X}$ is the universal cover of the 3-complex corresponding to $\mathcal{H}$.
Let us consider the reduced 3-dimensional diagram $f:(D^3,S^2)\rightarrow \widetilde{X}$ (defined in Section \ref{transverse1}). Let
$D$ be the finite set of vertices in $\Gamma$ dual to the 0-handles present in the diagram mentioned above.
Next, let us choose a ball of radius $r$ in the graph $\Gamma$ such that $ | B_1(r) |\geq 2 ||\phi_D||>| B_1(r-1) |$
 , where $r>2$ is real and $r$ is sufficiently large. Also,
$| B_1(r) | \hspace{0.2cm}\sim \hspace{0.2cm}\mathcal{O}(r^4)$, (\cite{Gr5},\cite{Bass}).
From Section \ref{comp}, we already know that,
$\frac{1}{2}||\phi_D||\hspace{0.2cm} \leq T_D^{ \gamma_0}
\hspace{0.2cm} \leq \hspace{0.2cm}l_{\gamma_0} \hspace{0.2cm}
|\partial_{V} D|$ for some $\gamma_0 \in B_1(r)$. Also as $l_{\gamma_0} \leq \hspace{0.2cm} r$ and
$ r-1\leq( 2||\phi_D||)^\frac{1}{4}\Rightarrow r\preceq( ||\phi_D||)^\frac{1}{4}$ and we have the following,

$\hspace{2cm}\therefore ||\phi_D|| \hspace{0.2cm}\leq \hspace{0.2cm} 2
l_{\gamma_0 } \hspace{0.2cm} ||\nabla\phi_D||$

$\hspace{2cm}\therefore||\phi_D|| \hspace{0.2cm}\preceq \hspace{0.2cm} 2
||\phi_D||^{\frac{1}{4}}\hspace{0.2cm} ||\nabla\phi_D||$

$\hspace{2cm}\therefore ||\phi_D|| \hspace{0.2cm}\preceq
\hspace{0.2cm}||\nabla\phi_D||^\frac{4}{3}$.
\end{proof}




 \begin{proof} (Proof of Theorem \ref{th1})
 Given a reduced diagram $f:(D^3,S^2)\rightarrow \widetilde{X}$, if the 3-ball and its boundary sphere are embedded in $\widetilde{X}$, then $||\nabla\phi_D||=|\partial D^3|$. If they are not embedded then by Lemma \ref{bound}, $||\nabla\phi_D||\leq |\partial D^3|$. Hence we have the following inequality.\\

$\hspace{0.8cm}\therefore Vol^3(D^3)\preceq
 |\partial D^3 |^\frac{4}{3}$, where $|\partial D^3 |$ is the volume of the boundary sphere.
 Therefore, by the definition of $\delta^{(2)}$, if $x$ is the maximum number of $3$-cells
 in the boundary sphere, then $\delta^{(2)}(x) \preceq x^\frac{4}{3}$.
\end{proof}

\subsection{Isoperimetric Inequalities for groups of Exponential growth}

\paragraph{}
In this section we present the upper bound for the second-order Dehn functions of $\mathcal{S}$ and consequently all cocompact lattices
in the Sol geometry. In other words, the proof of Theorem \ref{th2} will be completed here.\\
\begin{lemma}$||\phi_D|| \hspace{0.2cm}\preceq \hspace{0.2cm} \ln (||\nabla\phi_D||)\hspace{0.2cm} ||\nabla\phi_D||$.
\end{lemma}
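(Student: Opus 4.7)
The plan is to mimic the transport computation that worked for the Nil case, replacing the polynomial growth estimate with the exponential growth rate of the Sol Cayley graph. Let $\Gamma$ denote the dual Cayley graph embedded in $\widetilde{X}$ corresponding to the generating set $\mathcal{A}_0$ of $\mathcal{S}$ from Lemma \ref{generator}(ii), and let $D$ be the finite vertex set dual to the 0-handles of a reduced diagram $f:(D^3,S^2)\rightarrow \widetilde{X}$. I will invoke Propositions \ref{prop1} and \ref{prop2} to obtain some $\gamma_0 \in B_1(r)$ with
\[
\tfrac{1}{2}\|\phi_D\| \;\leq\; T_D^{\gamma_0} \;\leq\; l_{\gamma_0}\,\|\nabla \phi_D\|,
\]
where the radius $r$ is chosen so that $|B_1(r)| \geq 2\|\phi_D\| > |B_1(r-1)|$.

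Next I would bring in the fact that $\mathcal{S}$ is a cocompact lattice in the 3-dimensional geometry Sol, so the group has exponential growth; concretely, $|B_1(r)| \sim e^{Cr}$ for some constant $C>0$ depending on the generating set (this is a standard consequence of the hyperbolic flow giving Sol its name, and it can be read off from the eigenvalues of the matrix $B$ in Example \ref{lattice2}). Inverting the defining inequality $|B_1(r-1)| < 2\|\phi_D\|$ then gives $r \preceq \ln(\|\phi_D\|)$. Combined with $l_{\gamma_0} \leq r$, this yields
\[
\|\phi_D\| \;\preceq\; \ln(\|\phi_D\|)\,\|\nabla \phi_D\|.
\]

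The last step is the logarithmic self-correction that converts $\ln(\|\phi_D\|)$ into $\ln(\|\nabla \phi_D\|)$. From the inequality above, $\|\phi_D\|/\ln(\|\phi_D\|) \preceq \|\nabla \phi_D\|$, and taking logarithms of both sides (valid for all values large enough, and the small values are absorbed into the coarse Lipschitz equivalence constants of Definition \ref{eq}) yields
\[
\ln(\|\phi_D\|) - \ln\ln(\|\phi_D\|) \;\preceq\; \ln(\|\nabla \phi_D\|),
\]
which gives $\ln(\|\phi_D\|) \preceq \ln(\|\nabla \phi_D\|)$. Substituting back produces the claimed bound $\|\phi_D\| \preceq \ln(\|\nabla \phi_D\|)\,\|\nabla \phi_D\|$.

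The main obstacle I anticipate is justifying the exponential growth estimate $|B_1(r)| \sim e^{Cr}$ in the specific Cayley graph $\Gamma$ (as opposed to an abstract statement about Sol-lattices): the Cayley graph here is built from the dual generating set of Lemma \ref{generator}(ii), and one needs that this generating set realizes the intrinsic exponential volume growth of Sol. Once this is in place, the rest is bookkeeping, with the only delicate point being the passage from $\ln\|\phi_D\|$ to $\ln\|\nabla \phi_D\|$, which is why I want to handle the large-argument case separately so that the subtraction $\ln - \ln\ln$ really is comparable to $\ln$.
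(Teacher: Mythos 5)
Your argument reproduces the paper's proof essentially verbatim: same choice of radius $r$ with $|B_1(r)|\geq 2\|\phi_D\|>|B_1(r-1)|$, same invocation of exponential volume growth (which the paper attributes to Wolf and Milnor) to get $r\preceq\ln\|\phi_D\|$, same sandwich $\tfrac12\|\phi_D\|\leq T_D^{\gamma_0}\leq l_{\gamma_0}\|\nabla\phi_D\|$ from Propositions \ref{prop1} and \ref{prop2}, and the same logarithmic bootstrap to replace $\ln\|\phi_D\|$ with $\ln\|\nabla\phi_D\|$. The one point you flag as a worry, that the dual generating set $\mathcal{A}_0$ realizes the intrinsic exponential growth of $\mathcal{S}$, is not an issue since growth type is a quasi-isometry invariant and independent of the finite generating set, which is exactly what the paper's citation covers.
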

\begin{proof}
We start with  a reduced 3-dimensional diagram $(D^3,S^2)$, corresponding to a finitely presented group $G$. In this sub-section, we have a 3-dimensional example with exponential growth namely, the solvable group $\mathcal{S}$.

Let us choose $r$ such that $ |B_1(r)| \geq 2 ||\phi_D||> |B_1(r-1)| $,
$|B_1(r)| \hspace{0.1cm}\sim \hspace{0.1cm}Ce^{\ln(k)r}$, $k,C$ are both positive constants, (\cite{Wolf},\cite{Mil2}). Therefore we have,\\

$\hspace{2.2cm} ||\phi_D|| \geq\hspace{0.1cm}C e^{\ln(k)r}$.

$\hspace{2cm}\therefore r \preceq \ln(||\phi_D||) $

Next, from Section \ref{comp}, we already know that,
$\frac{1}{2}||\phi_D||\hspace{0.2cm} \leq T_D^{ \gamma_0}
\hspace{0.2cm} \leq \hspace{0.2cm}l_{\gamma_0} \hspace{0.2cm}
||\nabla\phi_D||$ for some $\gamma_0 \in B_1(r)$.

$\hspace{2cm}\therefore ||\phi_D|| \hspace{0.2cm}\leq \hspace{0.2cm} 2
l_{\gamma_0 } \hspace{0.2cm} ||\nabla\phi_D||$

$\hspace{2cm}\therefore ||\phi_D|| \hspace{0.2cm}\preceq\hspace{0.2cm} \ln(||\phi_D||)\hspace{0.2cm} ||\nabla\phi_D||$ ;
since $l_{\gamma_0} \leq \hspace{0.2cm} r\hspace{2cm} (*)$

As in the case of $\mathcal{H}$, we can say the in the embedded case $||\nabla\phi_D||=| \partial
D^3 |$, while in the immersed case we have $|| \nabla \phi_D || \leq | \partial D^3 |$, using the Lemma \ref{bound} above. Hence we have the following isoperimetric inequality,

Taking natural logarithm, $\ln$, on either side of $(*)$ we get,

$\hspace{2.4cm} \ln( ||\phi_D||) \hspace{0.2cm}\preceq \hspace{0.2cm} \ln( \ln(||\phi_D||
)\hspace{0.2cm} ||\nabla\phi_D||)$,

$\hspace{2cm} \therefore \ln( ||\phi_D||) \hspace{0.2cm}\preceq\hspace{0.2cm} \ln( \ln(||\phi_D||)\hspace{0.2cm}+ \hspace{0.2cm}\ln (||\nabla\phi_D||)$,

Now from $(*)$,for large values of $||\phi_D||$,

 $\hspace{2.4cm}\ln(||\phi_D||)\leq  \frac{||\phi_D||}{\ln(||\phi_D||)}\preceq ||\nabla\phi_D||$,

$\hspace{2cm} \therefore \ln(||\phi_D||) \hspace{0.2cm}\preceq \hspace{0.2cm} \ln (||\nabla\phi_D||)$

Again from $(*)$,

$\hspace{2.45cm} ||\phi_D|| \hspace{0.2cm}\preceq \hspace{0.2cm}\ln (||\nabla\phi_D||)\hspace{0.2cm} ||\nabla\phi_D||$.
\end{proof}

\begin{proof}(Proof of Theorem \ref{th2})
From the lemma above we have,
$ Vol^3(D^3)\hspace{0.2cm}\preceq\hspace{0.2cm}\ln (|\partial D^3|)\hspace{0.2cm}|\partial D^3| $, where $|\partial D^3 |$ is the volume of the boundary sphere. Therefore, by the definition of $\delta^{(2)}$, if $x$ is the maximum number of $3$-cells
 in the boundary spheres then $\delta^{(2)}(x) \preceq x\ln(x)$.
\end{proof}

\bibliography{transport3}
\bibliographystyle{siam}
\end{document}